\numberwithin{equation}{section}
\newtheorem {theorem}{Theorem}[section]
\newtheorem {prop}[theorem]{Proposition}
\newtheorem {lem}[theorem]{Lemma}
\newtheorem {cor}[theorem]{Corollary}
\newtheorem{ass}{Claim}[section]
\theoremstyle{definition}
\newtheorem* {defi}{Definition}
\theoremstyle{remark}
\newtheorem {remark}{Remark}[section]
\newtheorem {example}{Example}[section]
\def\ba{\begin{array}}
\def\ea{\end{array}}
\def\be{\begin{equation} \label}
\def\ee{\end{equation}}
\def\bit{\begin{itemize}}
\def\eit{\end{itemize}}
\def\ben{\begin{enumerate}}
\def\een{\end{enumerate}}
\def\lo{\mathopen{]}\/}  
\def\ro{\/\mathclose{[}} 
\def\RR{\mathbb{R}}
\def\NN{\mathbb{N}}
\def\ZZ{\mathbb{Z}}
\def\a{\alpha}
\def\b{\beta}
\def\g{\gamma}
\def\d{\delta}
\def\z{\zeta}
\def\n{\eta}
\def\th{\vartheta}
\def\k{\kappa}
\def\l{\lambda}
\def\r{\varrho}
\def\s{\sigma}
\def\ph{\varphi}
\def\o{\omega}
\def\G{\Gamma}
\def\D{\Delta}
\def\Th{\Theta}
\def\L{\Lambda}
\def\S{\Sigma}
\def\O{\Omega}
\def\sE{\mathsf{E}}
\def\sM{\mathsf{M}}
\def\cA{\mathcal{A}}
\def\cF{\mathcal{F}}
\def\cL{\mathcal{L}}
\def\cS{\mathcal{S}}
\def\sG{\mathscr{G}}
\def\sP{\mathscr{P}}
\def\inv{^{-1}}
\def\ti{\to\infty}
\def\1{\mathds{1}}
\def\pr{\text{\rm pr}}
\newcommand{\diam}{\mathrm{diam}}
\def\HE{\mathcal{E}}
\def\out{{\overline{\o}}}
\def\Ocr{\O_\text{\rm cr}}
\def\hOcr{\hat\O_\text{\rm cr}}
\def\bsub{\Subset\RR^d}
\def\Gb{\overline{\G}}
\newcommand{\bout}[2][\L]{\partial_{#1}#2}
\newcommand{\boutS}[3][\L]{\partial_{#1}^{#2}{#3}}
\newcommand{\Del}{\mathsf{Del}}
\newcommand{\Vor}{\mathsf{Vor}}
\newcommand{\LC}{\mathsf{LC}}
\newcommand{\Gab}{\mathsf{Gab}}
\newcommand{\SG}{{\mathsf{SG}}}
\begin{document}

\title{\bfseries Existence of Gibbsian point processes\\with geometry-dependent interactions}

\author{David Dereudre\footnotemark[1], Remy Drouilhet\footnotemark[2] \ and Hans-Otto Georgii\footnotemark[3]}
\date{\today}
\maketitle

\begin{abstract} \noindent
We establish the existence of stationary Gibbsian point processes for interactions that act on hyperedges
between the points. For example, such interactions can depend on Delaunay edges or triangles, 
cliques of Voronoi cells or clusters of $k$-nearest neighbors. The classical case of pair interactions is also included.
The basic tools are an entropy bound and stationarity.

\noindent
{\bf Keywords}. {Gibbs measure, hypergraph, Delaunay mosaic, Voronoi tessellation, entropy.}\\
{\bf MSC}.Primary  60K35; Secondary: 60D05, 60G55, 82B21.

\end{abstract}

\footnotetext[1]{LAMAV, Universit\'e de Valenciennes et du Hainaut-Cambr\'esis, Le Mont Houy
 59313 Valenciennes Cedex 09, France. 
E-mail: david.dereudre@univ-valenciennes.fr}

\footnotetext[2]{LJK, Universit\'e de Grenoble, B.S.H.M., 1251 Av. Centrale, BP 47 38040 Grenoble Cedex 9, France.
E-mail: Remy.Drouilhet@upmf-grenoble.fr}

\footnotetext[3]{Mathematisches Institut der Universit\"{a}t
         M\"{u}nchen, Theresienstra{\ss}e 39, 80333 M\"{u}nchen, Germany.
         E-mail: georgii@math.lmu.de}

\section{Introduction}

Recent developments in statistical physics, stochastic geometry and spatial statistics involve Gibbs point processes with interactions depending on the local geometry of configurations in $\RR^d$. A prominent class of such interactions is based on the nearest-neighbor graph coming from the Delaunay triangulation. In biology, such systems are used to model interacting cells in tissues or foams \cite{FRAEJ}, \cite{LS}. In spatial statistics and stochastic geometry, structured point patterns, point processes lying along fibers or regular Delaunay tessellations have been studied via geometric Gibbs modifications; see \cite{BBD99a}, \cite{Dereudre} and \cite{Ripley}. A probabilistic motivation comes from stationary renewal processes:
Since these can be characterized as Gibbs processes for interactions between nearest-neighbor
pairs of points \cite[Section 6]{HS}, Gibbs processes on $\RR^d$ with Delaunay tile interaction can be viewed as a
multi-dimensional counterpart \cite{DG}.  

In this paper we consider general geometry-dependent interactions that are defined on a hypergraph structure $\HE$. For every point configuration $\o$ in~$\RR^d$, $\HE(\o)$ denotes a set of hyperedges on $\o$,
and the formal Hamiltonian of $\o$ is given by 
$$ H(\o)=\sum_{\n\in\HE(\o)}  \ph(\n,\o)$$
with a potential $\ph(\n,\o)$  which only depends on $\n$ and the points of $\o$
in some neighborhood of $\n$. This locality of $\ph$ will be called the finite horizon property, see \eqref{eq:fhp} below. 
This general setting includes all the above-mentioned cases and also the classical many-body interactions of 
finite range that are familiar from statistical mechanics \cite{Ruelle}.

There is a principal difference between the geometric interactions considered here
and the classical many-body interactions. Namely, suppose
a particle configuration $\o$ is augmented by a new particle at $x$. In the case of a
many-body interaction, this is only influenced by an additional interaction term between $x$ and
the particles of $\o$, and the interaction between the particles of $\o$ is not affected by $x$. 
In other words, the classical many-body interactions are additive.
In our setting, the new particle at $x$ typically alters the hyperedges around $x$
completely: some hyperedges are created and some others are destroyed.  This means that both 
$\HE(\o\cup\{x\})\setminus\HE(\o)$ and  $\HE(\o)\setminus\HE(\o\cup\{x\})$ are non-empty,
so that $H(\o\cup\{x\})$ and $H(\o)$ each contain terms that are not present in the other.
This phenomenon blurs the usual distinction between attractive and repulsive interactions. 
Moreover, if the potential $\ph(\n,\o)$ is allowed to take the value~$\infty$ (hard-exclusion case), 
we arrive at the so-called non-hereditary situation that a configuration $\o$ is excluded although  $\o\cup\{x\}$ 
is possible. This last case makes it difficult to use an infinitesimal characterization of Gibbs measures in terms of their Campbell measures and Papangelou intensities. Nevertheless, such an infinitesimal approach was first used to prove the existence of Gibbs measures for Delaunay interactions by requiring geometric constraints on the interaction 
\cite{BBD99a,BBD99b,BBD08,Dereudre}. A quite different global approach, first used in \cite{GH} and based on stationarity and thermodynamic quantities such as pressure and free energy density, recently allowed to prove the existence of Gibbsian Delaunay tessellations for general bounded interactions without any geometric restrictions \cite{DG}, and a similar approach could also be applied to quermass-interaction processes~\cite{D09}.

In this paper we address the existence problem for the general formalism of hypergraph interactions introduced here. Our approach is global as in \cite{D09,DG} and leads to a significant improvement of the existing results. In particular, 
we establish the existence of Gibbsian Delaunay tessellations for non-bounded and  hard-exclusion potentials. In the classical context of stable many-body interaction of finite range, our results permit to relax the superstability assumption. 
Basic ingredients of the proof are an entropy bound to exploit the compactness of the level sets of the entropy density, 
and a somewhat delicate control of the range of the interaction, which takes advantage of stationarity.

The general setting of Gibbs measures for hypergraph interactions is introduced in Section~2. Section~3 contains our assumptions and the existence theorems. Section~4 offers a series of examples that includes many-body interactions of finite range as well as interactions on Delaunay tiles and Voronoi cells and between $k$-nearest neighbors. 
The proofs of the main results follow in Section~5, and an appendix is devoted to measurability questions.

\section{Preliminaries} 

\subsection{Point configurations and hyperedge interactions}

Consider the Euclidean space $\RR^d$ of arbitrary dimension $d\ge1$. Subregions of $\RR^d$ will typically be denoted by $\L$ or $\D$ and will always be assumed to be Borel with positive Lebesgue measure $|\L|$ resp.~$|\D|$.
We write $\D\bsub$ if $\D$ is bounded. 
A \emph{configuration} is  a subset $\o$ of $\RR^d$ which is locally finite, in that $\o\cap\D$ has finite cardinality
$N_\D(\o)=\#(\o\cap\D)$ for all $\D\bsub$. 
The space $\O$ of all configurations is equipped with the 
$\s$-algebra $\cF$ that is generated by the counting variables $N_\D$ with $\D\bsub$. 
It will often be convenient to write $\o_\D$ in place of $\o\cap\D$.
As usual, we take as reference measure on $(\O,\cF)$ the 
Poisson point process $\Pi^z$ of an arbitrary intensity $z>0$.  Recall that $\Pi^z$ is the unique probability measure
on $(\O,\cF)$ such that the following holds for all $\D\bsub$:
(i) $N_{\D}$ is Poisson distributed with parameter $z|\D|$, and (ii) conditional on $N_\D=n$, 
the $n$ points in $\D$ are independent with uniform distribution on $\D$, for each integer $n\ge1$.

Next, let $\O_f\subset\O$ denote the set of all finite configurations $\o$ (which means that $\#(\o)<\infty$), and
$\cF_f=\cF|_{\O_f}$ the trace $\s$-algebra of $\cF$ on $\O_f$. The product space $\O_f\times\O$
carries the product $\s$-algebra $\cF_f\otimes\cF$.
For each $\L\subset\RR^d$ we write $\O_\L=\{\o\in\O:\o\subset\L\}$
for the set of all configurations in $\L$, $\pr_\L:\o\to\o_\L=\o\cap\L$ for the projection
from $\O$ to $\O_\L$, $\cF_\L'= \cF|_{\O_\L}$ for the trace $\s$-algebra of $\cF$ on 
$\O_\L$, and $\cF_\L=\pr_\L\inv\cF_\L'\subset\cF$ for the $\s$-algebra of all events that happen in $\L$ only.
The reference measure on $(\O_\L,\cF_\L')$ is $\Pi^z_\L:=\Pi^z\circ\pr_\L\inv$.
Finally, let $\Th= (\th_x)_{x\in\RR^d}$ be the shift group, where $\th_x:\O\to\O$ is the translation by the vector 
$-x\in\RR^d$. By definition, $N_\D(\th_x\o)=N_{\D+x}(\o)$ for all $\D\bsub$.

The interaction of points to be considered in this paper will depend on the geometry of their location.
This geometry will be described in terms of a hypergraph, and the interaction potential will be defined on the
hyperedges.

\begin{defi}\mbox{}
\bit
\item A \emph{hypergraph structure} is a measurable subset $\HE$ of $\O_f\times\O$ such that  
$\n \subset \o$ for all $(\n,\o )\in\HE$.
If $(\n,\o )\in\HE$, we say that $\n$ is a \emph{hyperedge} of $\o$, and we write $\n\in\HE(\o)$.
 
\item A \emph{hyperedge potential} is a measurable function $\ph$ from a hypergraph structure 
$\HE$ to $\RR\cup\{\infty\}$. 

\item A hyperedge potential $\ph$ (or, more explicitly, the pair $(\HE,\ph)$) is called \emph{shift-invariant} if
\[
(\th_x\n,\th_x\o )\in\HE \text{ and } \ph(\th_x\n,\th_x\o )= \ph(\n,\o )
\text{ for all  $(\n,\o )\in\HE$ and $x\in \RR^d$.}
\]
\item Let us say that $\ph$ (or  the pair $(\HE,\ph)$) satisfies the \emph{finite horizon property}
if for each $(\n,\o )\in\HE$ there exists  some $\D\bsub$ such that 
\be{eq:fhp}
(\n,\tilde\o)\in\HE \text{ and }\ph(\n,\tilde\o)=\ph(\n,\o)\text{ when }\tilde\o=\o \text{ on }\D.
\ee
\eit
\end{defi}\noindent
\emph{We will assume throughout this paper that the hyperedge potential $\ph$ under consideration is shift-invariant 
and exhibits the finite horizon property. Moreover, for notational convenience we set $\ph=0$ on $\HE^c$}. Since $\HE$ is measurable,
this does not affect the measurability of $\ph$.

The domain $\HE$ of $\ph$ can be considered as a rule that turns each 
configuration $\o$ into a hypergraph $(\o,\HE(\o))$. Both $\HE$ and $\ph$ are not affected by translations.
Moreover, the presence of a hyperedge $\n\in\HE(\o)$ and the value of $\ph(\n,\o)$ can be determined
by looking at $\o$ in a (sufficiently large but) bounded 
neighborhood $\D$ of $\n$, called the \emph{horizon of $\n$ in $\o$}, which in general 
depends on both $\n$ and $\o$. Note that in general there is no minimal such horizon. 
To obtain a standard choice of  $\D$ one can take the closed ball $B_{\n,\o}=\bar B(g_\n,r_{\n,\o})$ with center
at the gravicenter $g_\n$ of $\n$ and radius $r_{\n,\o}$ chosen smallest possible. 
Finally, we note that the concept of hypergraph structure is similar to that of a
cluster property as introduced in \cite{Z}. 
Here are two examples the reader might keep in mind. 
Further examples will follow in Section~\ref{sec-examples}.

\begin{example}\label{ex:rangeR} \emph{Many-body interactions of bounded range}. Let $r>0$ and 
\[
\LC_r=\big\{(\n,\o): \n\subset\o,\  \text{diam}(\n)\le r,\ \o\in\O\big\}
\] 
be the locally complete graph. Thus, for each $\o\in\O$, $\LC_r(\o)$ consists of all hyperedges between points of distance at most~$r$. If we assume that $\ph(\n,\o)$ only depends on $\n$, we are in the classical situation of many-body interactions of range $r$. The finite horizon property for $(\n,\o)\in \LC_r$ then holds for arbitrary $\D\supset\n$.
If $\LC_r$ is restricted to hyperedges of cardinality two, we arrive at the familiar pair interactions of statistical mechanics.   
\end{example}

\begin{example}\label{ex:Delaunay} \emph{Delaunay potentials}.  The set $\Del$ of Delaunay hyperedges consists of all pairs 
$(\n,\o)$ with $\n\subset \o$ for which there exists an open ball $B(\n,\o)$ with $\partial B(\n,\o)\cap\o=\n$ that contains no points of $\o$. For $k=1,\ldots,d{+}1$ we write $\Del_k=\{(\n,\o)\in\Del: \#{\n}=k\}$ for the set of
all Delaunay simplices with $k$ vertices. Clearly, $\Del$ and $\Del_k$ are hypergraph structures.
It is possible that the convex hull of a set $\n\in\Del(\o)$ is not a simplex, namely when $\n$ consists
of four or more points on a sphere with no point inside. However, this is an exceptional case, which occurs only with probability zero for our Poisson reference measure~$\Pi^z$. 
Note that $B(\n,\o)$ is only uniquely determined when $\#\n=d{+}1$ and $\n$ is affinely independent.

The simplest class of Delaunay hyperedge potentials consists of pair interactions of the form
$\ph(\n,\o)=\phi(|x-y|)$ for $\n=\{x,y\}\in\Del_2(\o)$. Such a $\ph$ satisfies the finite horizon property 
\eqref{eq:fhp} with $\D=\bar B(\n,\o)$ for any ball $B(\n,\o)$ as above.

An example of a potential $\ph(\n,\o)$ on $\Del_2$ which does not only depend on $\n$ 
but also on $\o$ is  $\ph(\n,\o)=\phi(\Vor_{\o}(x),\Vor_{\o}(y))$. 
Here we write $\Vor_{\o}(x)$ for the Voronoi cell associated to a point $x\in\o$, viz.
the set 
\be{Voronoi}
\Vor_{\o}(x):=\big\{y\in\RR^d:\; |x-y| \leq |\tilde x-y| \quad \forall \,\tilde x\in\o\big\}
\ee
of all points of $\RR^d$ which are closer to $x$ than to all other points of $\o$. It is well-known that the
Voronoi cells form a tessellation \cite{SchnWeil}. Also, any two points of a configuration are connected by a
Delaunay edge if and only if their Voronoi cells have a non-trivial intersection. That is,
\be{delvor}
\{x,y\}\in \Del_2(\o)\Longleftrightarrow \#(\Vor_{\o}(x)\cap \Vor_{\o}(y))>1.
\ee
This reveals that the Delaunay graph is a nearest-neighbor graph.
The potential above\linebreak satisfies the finite horizon property \eqref{eq:fhp} with $\D$ equal to the closure of the set\linebreak 
$\bigcup_{\xi \in  \Del(\o),\; \xi\cap\n\neq \emptyset } B(\xi,\o)$, provided the cells $\Vor_{\o}(x)$ and $ \Vor_{\o}(y)$ are bounded. 
The proviso is necessary because unbounded Voronoi cells, which can occur at the ``boundary'' of $\o$, are not
protected by the points in $\D$. 
So we must exclude from $\Del_2(\o)$  all edges $\{x,y\}$ for which $\Vor_{\o}(x)\cup \Vor_{\o}(y)$ is unbounded.
\end{example}

\subsection{Gibbs measures for hyperedge potentials}

Our objective here is to introduce the concept of a Gibbsian point process for 
an activity $z>0$ and a hyperedge potential $\ph$
defined on a hypergraph structure $\HE$. First we will
introduce the Hamiltonian for a bounded region $\L\bsub$ with configurational boundary condition $\o\in\O$.
This requires to consider the set of hyperedges $\n$ in a configuration $\o$ for which either $\n$ itself or $\ph(\n,\o)$
depends on the points of $\o$ in $\L$. Specifically, we set
\be{eq:HE_L} 
\HE_{\L}(\o)=\big\{\n\in\HE(\o): \ph(\n,\z\cup\o_{\L^c})\ne\ph(\n,\o)\text{ for some } \z\in\O_\L
\big\}.
\ee
Recall the convention that $\ph=0$ on $\HE^c$. So, if $\n\in\HE(\o)\setminus\HE(\z\cup\o_{\L^c})$ for some
$\z\in\O_\L$ then either $\n\notin\HE_\L(\o)$, or $\n$ is irrelevant for $\o$, in that $\ph(\n,\o)=0$.
The \emph{Hamiltonian in $\L$ with boundary condition $\o$}  is then given 
by the formula
\be{localenergy}
H_{\L,\o}(\zeta):=\sum_{\n\in\HE_\L(\z\cup \o_{\L^c})} \ph(\n,\z\cup \o_{\L^c}) \quad\text{ for } \z\in\O_\L, 
\ee
provided this sum is well-defined. 
As usual, we also consider the associated partition function
\[
Z_{\L,\o}^z:=\int e^{- H_{\L,\o}(\zeta)}\, \Pi_\L^z(d\zeta)\,.
\]
To ensure that these quantities are well-defined, we
impose the following condition on the boundary condition $\o$. Let $\ph^-=(-\ph)\vee 0$ be the negative part of $\ph$.
 
\begin{defi}
A configuration $\o\in\O$ is called \emph{admissible for a region $\L\bsub$ 
and an activity $z>0$} if 
\[
H_{\L,\o}^-(\z):=\sum_{\n\in\HE_\L(\z\cup \o_{\L^c})} \ph^-(\n,\z\cup \o_{\L^c}) <\infty
\quad\text{for $\Pi_\L^z$-almost all $\z\in\O_\L$}
\]
(so that $H_{\L,\o}$ is almost surely well-defined),
and $0<Z_{\L,\o}^z<\infty$. We write $ \O_*^{\L,z}$ for the set of all these~$\o$.
\end{defi}
We note that, 
in contrast to the standard setting of statistical mechanics, the partition function is not automatically 
positive because, in the present setting,  $H_{\L,\o}(\emptyset)$ is not necessarily finite.
For $\o\in\O_*^{\L,z}$, we can define the Gibbs distribution for $(\HE,\ph,z)$ in a region $\L\bsub$ 
with boundary condition $\o$ as usual by
\be{eq:Gibbs_distr}
G^z_{\L,\o}(F)=\int_{\O_\L} \1_F(\z\cup\o_{\L^c})\, e^{-H_{\L,\o}(\z)}\, \Pi_\L^z(d\z)
\big/Z_{\L,\o}^z\,,
\ee
where $F\in\cF$ is arbitrary.

\begin{defi}
Let $\HE$ be a hypergraph structure, $\ph$ a hyperedge potential, and $z>0$ an activity.
A probability measure $P$ on $(\O,\cF)$ is called a \emph{Gibbs measure for $\HE$, $\ph$ and $z$} 
if $P(\O_*^{\L,z})=1$ and 
\be{DLR}
\int f\,dP = \int_{\O_*^{\L,z}} \frac1{Z_{\L,\o}^z}\int_{\O_\L} f(\zeta \cup \o_{\L^c}) \, e^{- H_{\L,\o}(\zeta)}\, 
\Pi_\L^z(d\zeta)\, P(d\o)
\ee
for every $\L\bsub$ and  every measurable $f:\O\to [0,\infty\ro$. 
\end{defi}
The equations (\ref{DLR}) are known as the DLR equations (after Dobrushin, Lanford and Ruelle).
They express that $G^z_{\L,\o}(F)$ is a version of the conditional probability $P(\,F\,|\cF_{\L^c})(\o)$. 
We will be particularly interested in Gibbs measures that are stationary, that is,
invariant under the shift group $\Th=(\th_x)_{x\in\RR^d}$.
We write $\sP_\Th$ for the set of all $\Th$-invariant probability measures $P$ on $(\O,\cF)$ with finite intensity 
$i(P)=\int N_{[0,1]^d}\,dP$, and $\sG_\Th(\ph,z)$ for the set of all Gibbs measures for $\ph$ and $z$ that belong to
$\sP_\Th$. 
We conclude this section with a discussion of measurability questions.

\begin{remark}\label{rem:measurability}\emph{Measurability}.
The quantities introduced above are not measurable with respect to the underlying $\s$-algebras
defined so far, but only with respect to their universal completion.
Specifically, for each $\s$-algebra $\cA$ let $\cA^*$ be the associated  $\s$-algebra of all
universally measurable sets, i.e., of the sets which belong to the $P$-completion of $\cA$
for all probability measures $P$ on $\cA$; see
\cite[pp.~36~\&~280]{Cohn}.
It is then the case that, for each $\L\bsub$,
the Hamiltonian $(\z,\o)\mapsto H_{\L,\o}(\z)$ is measurable with respect to
$(\cF_\L'\otimes\cF_{\L^c})^*$. Likewise, the partition function $\o\to Z_{\L,\o}^z$ is measurable with respect to
$\cF_{\L^c}^*$,  and $\O_*^{\L,z}\in\cF_{\L^c}^*$. Moreover,  $(\o,F)\to G^z_{\L,\o}(F)$ is a probability kernel
from $(\O_*^{\L,z},\cF_{\L^c}^*|_{\O_*^{\L,z}})$ to $(\O,\cF)$.
All this will be proved in the appendix.
\emph{We will therefore identify all probability measures in this paper 
with their respective complete extension.}
This convention underlies already the preceding definition of a Gibbs measure.
\end{remark}

\section{Hypotheses and results}

Having defined the concept of Gibbs measure for a hyperedge potential we now turn to our main theme,
the existence of such Gibbs measures. Let us state the conditions we need.
In the subsequent section we will provide a series of examples for which these conditions are met.

We begin with an assumption which says that hyperedges with a large horizon require the existence
of a large ball with only a few points. This will imply that the Hamiltonian $H_{\L,\o}$ depends only on the points
of $\o$ in a bounded region $\partial\L(\o)$,
and can be viewed as a sharpening of the finite horizon property \eqref{eq:fhp}. 
\begin{list}{}{\setlength{\leftmargin}{3em}\setlength{\labelwidth}{3em}}
\item[{\bf (R)}] \emph{The range condition}. There exist constants $\ell_R,n_R\in\NN$ and $\d_R<\infty$ such that for all 
$(\n,\o) \in \HE$ one can find a horizon $\D$ as in \eqref{eq:fhp} satisfying the following:\\
For every $x,y\in \D$, there exist $\ell$ open balls  $B_1,\ldots,B_\ell$ (with $\ell\le \ell_R$) such that \\
\hspace*{1em} -- the set $\cup_{i=1}^\ell \bar B_i$ is connected and contains $x$ and $y$, and\\
\hspace*{1em} -- for each $i$, either $\text{diam\,}B_i\le \d_R$ or $N_{B_i}(\o)\le n_R$.
\end{list}
Note that {\bf (R)} is trivially satisfied when all horizon sets can be chosen to have uniformly bounded diameters.
For instance, this holds in Example \ref{ex:rangeR}.
The use of the range condition {\bf (R)} will be revealed by Proposition~\ref{prop: range-confinement} below, 
which states that the following finite range property holds almost surely for nondegenerate $P\in\sP_\Th$.
 
\begin{defi}
Let $\L\bsub$ be given.
We say a configuration $\o\in\O$ \emph{confines the range of $\ph$ from $\L$} 
if there exists a set $\partial\L(\o)\bsub$ such that
$\ph(\n,\z\cup\tilde\o_{\L^c})=\ph(\n,\z\cup\o_{\L^c})$ whenever 
$\tilde\o=\o$ on $\partial\L(\o)$,  $\z\in\O_\L$ and $\n\in\HE_\L(\z\cup\o_{\L^c})$.
In this case we write $\o\in\Ocr^{\L}$.
$\partial\L(\o)$ is called the $\o$-boundary of $\L$, and we use the abbreviation
$\bout{\o}=\o_{\partial\L(\o)}$. 
\end{defi}
Given any $\o\in\Ocr^\L$, we assume in the following that $\partial\L(\o)=\L^r\setminus\L$, where $\L^r$ is the 
closed $r$-neighborhood of $\L$ and $r=r_{\L.\o}$ is chosen as small as possible. Moreover,
for $\o\in\Ocr^\L$ we have
\be{eq:Hfr}
H_{\L,\o}(\z)=\sum_{\n\in\HE_\L(\z\cup \bout{\o})} \ph(\n,\z\cup \bout{\o}),
\ee
and this sum extends over a finite set. This means that the first assumption in the definition of admissibility for $\L$ is satisfied.
Here is the proposition announced above. It will follow from Proposition~\ref{Ploc} below.

\begin{prop}\label{prop: range-confinement}Under {\bf (R)},
for each $\L\bsub$ there exists a set $\hOcr^\L\in\cF_{\L^c} $ such that $\hOcr^\L\subset\Ocr^\L$ and
$P(\hOcr^\L)=1$ for all $P\in\sP_\Th$ with $P(\{\emptyset\})=0$.
\end{prop}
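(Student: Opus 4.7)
The plan is to reformulate range confinement as a density condition on $\o_{\L^c}$ forbidding long admissible chains in the sense of \textbf{(R)}, show this condition forces the desired bound on horizons, and verify it has $P$-full measure by stationarity and the positive intensity implied by $P(\{\emptyset\})=0$.

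\textbf{Definition.} For $r>0$, call $\o\in\O$ \emph{$r$-confining} if no family of open balls $B_1,\dots,B_\ell$ with $\ell\le\ell_R$ satisfies both: (i) $\bigcup_{i=1}^\ell\bar B_i$ is connected and meets both $\L$ and $\RR^d\setminus\L^r$; and (ii) each $B_i$ has either $\diam B_i\le\d_R$ or $N_{B_i}(\o_{\L^c})\le n_R$. Set
\[
\hOcr^\L \;:=\; \textstyle\bigcup_{r\in\NN}\bigl\{\o\in\O : \o\text{ is }r\text{-confining}\bigr\}.
\]
Since $N_{B_i}(\o_{\L^c})=N_{B_i\cap\L^c}(\o)$, this event depends on $\o$ only through $\o_{\L^c}$, so $\hOcr^\L\in\cF_{\L^c}^*$ (cf.\ Remark \ref{rem:measurability}).

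\textbf{Inclusion $\hOcr^\L\subset\Ocr^\L$.} Fix $\o\in\hOcr^\L$ with witness $r$, and put $\partial\L(\o):=\L^r\setminus\L$. For any $\z\in\O_\L$ and $\n\in\HE_\L(\z\cup\o_{\L^c})$, the horizon $\D$ supplied by \textbf{(R)} for $(\n,\z\cup\o_{\L^c})$ must meet $\L$---otherwise $\ph(\n,\cdot)$ would be insensitive to $\o_\L$, contradicting $\n\in\HE_\L$. If some $y\in\D$ lay outside $\L^r$, applying \textbf{(R)} to $y$ and some $x\in\D\cap\L$ would produce balls $B_1,\dots,B_\ell$ satisfying (i)--(ii) for $\z\cup\o_{\L^c}$, and the monotonicity $N_{B_i}(\o_{\L^c})\le N_{B_i}(\z\cup\o_{\L^c})\le n_R$ would exhibit a chain forbidden by $r$-confinement. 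Hence $\D\subset\L^r$, so $\o\in\Ocr^\L$; the sum in \eqref{eq:Hfr} is finite because $\n\subset\D\subset\L^r$ for every relevant $\n$ and $(\z\cup\o_{\L^c})\cap\L^r$ is finite.

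\textbf{$P(\hOcr^\L)=1$.} Let $P\in\sP_\Th$ with $P(\{\emptyset\})=0$. The spatial ergodic theorem gives $N_{B(x,s)}(\o)/|B(x,s)|\to i_P(\o)$ $P$-a.s.\ as $s\to\infty$, where $i_P$ is the conditional intensity given the shift-invariant $\s$-algebra; the event $\{i_P=0\}$ would force all cubes to be empty, i.e.\ $\o=\emptyset$, so $P(i_P>0)=1$. If $\o$ is not $r$-confining, the $\le\ell_R$ small balls of the chain contribute at most $\ell_R\d_R$ to its reach, so some ball $B$ in the chain has diameter $D\ge(r-\ell_R\d_R)/\ell_R$ and $N_B(\o_{\L^c})\le n_R$; reading the chain from $\L$ outward and taking $B$ to be the first non-small ball encountered, $B$ contains a point within $\ell_R\d_R$ of $\L$, hence its centre lies in $\L^{\ell_R\d_R+D/2}$, a region covered by $O(1)$ points of a lattice of mesh $D/(4\sqrt d)$ (independently of $D$). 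Each such point $y$ yields a sub-ball $B(y,D/4)\subset B$ with $N_{B(y,D/4)\cap\L^c}(\o)\le n_R$, and the decomposition $N_{B(y,D/4)\cap\L^c}\ge N_{B(y,D/4)}-N_\L$ together with stationarity gives
\[
P\bigl(N_{B(y,D/4)\cap\L^c}(\o)\le n_R\bigr)\;\le\;P\bigl(N_{B(0,D/4)}(\o)\le n_R+M\bigr)+P(N_\L>M).
\]
Choosing $M$ so that $P(N_\L>M)<\e/2$ and then $D$ (and hence $r$) large enough via the ergodic limit above to make the first term $<\e/2$, a union bound over the $O(1)$ lattice points yields $P(\o\text{ is not }r\text{-confining})<C\e$. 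Letting $r\to\infty$ concludes $P(\hOcr^\L)=1$.

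\textbf{Main obstacle.} The delicate point is the probability estimate in the last step: the large $\o$-sparse ball $B$ extracted from the chain may have centre at distance comparable to its own diameter from $\L$, yet a uniform-in-$r$ bound is needed. The fix is to rescale the covering lattice with $D$ so that the number of candidate centres stays bounded, and to use $N_{B\cap\L^c}\ge N_B-N_\L$ to reduce to a translation-invariant ball count controllable by Birkhoff; localizing the ball further (e.g., to a fixed neighborhood of $\L$) would fail precisely because of this scaling, so the trick is to push the scale into the lattice rather than into the geometry.
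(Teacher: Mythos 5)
Your overall strategy---reduce ``unconfined range'' to the existence of a large $\o$-sparse ball near $\L$, then rule this out by stationarity together with the fact that a stationary point process is a.s.\ empty or infinite---is the same idea that drives the paper's proof (Proposition~\ref{Ploc}). But two steps in your execution do not go through as written. First, the ball you extract is the wrong one: the first non-small ball encountered when reading the chain outward is indeed sparse and near $\L$, but its diameter need only exceed $\d_R$; it need not be the ball of diameter $D\ge(r-\ell_R\d_R)/\ell_R$ whose existence you derived, and that large ball may sit anywhere along the chain. You need a ball that is simultaneously sparse, of diameter at least $D_0(r):=(r-\ell_R\d_R)/\ell_R$, and whose centre lies within a distance of $\L$ controlled \emph{in units of} $D_0$; this requires an extra argument (for instance, take the first sparse ball of diameter $\ge D_0$: the sparse balls preceding it all have diameter $<D_0$ and there are at most $\ell_R$ of them, so it meets $\L^{\ell_R(\d_R+D_0)}$, and one can then pass to a sparse sub-ball of diameter $D_0$ near its entry point). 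Second, and more seriously, your union bound runs over a lattice whose mesh is the \emph{random} diameter $D$ of the extracted ball; since $D$ ranges over $[D_0,\infty)$, this is not a union over a fixed finite family of events, and ``rescaling the lattice with $D$'' does not repair it. Without a reduction to a single deterministic scale the bound $P(\o\text{ is not }r\text{-confining})<C\e$ is not established, and a general $P\in\sP_\Th$ provides no rate for $P(N_{B(0,s)}\le K)\to 0$, so summing over dyadic scales does not help either.

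The paper sidesteps both problems by testing a fixed finite family of congruent regions: it sets $\hOcr^{\L,n}=\{\min_{0\ne k\in L_m}N_{\L_n^k}>n_R\}$ for the $(2m{+}1)^d$ translates $\L_n^k$ of a growing box $\L_n$, shows geometrically that an unconfined chain forces one ball of diameter larger than $3\,\mathrm{diam}\,\L_n$, which must swallow some whole translate $\L_n^k$ with $k\ne 0$, and then bounds $1-P(\hOcr^{\L,n})\le(\# L_m-1)\,P(N_{\L_n}\le n_R)\to(\# L_m-1)\,P(\{\emptyset\})=0$ by translation invariance alone---no ergodic theorem, no covering lattice, and the events are manifestly in $\cF_{\L^c}$. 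By contrast, your $r$-confining event is an existential statement over an uncountable family of ball chains, so even its universal measurability needs justification, while the proposition asserts plain $\cF_{\L^c}$-measurability.
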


\noindent 
Our next assumption is stability, the standard assumption that ensures the finiteness of all partition functions.
In our setting, a somewhat modified definition turns out to be suitable.

\begin{list}{}{\setlength{\leftmargin}{3em}\setlength{\labelwidth}{3em}}
\item[{\bf (S)}] \emph{Stability}. The hyperedge potential $\ph$ is called \emph{stable} if 
there exists a constant $c_S\ge 0$ such that 
\be{stability}
H_{\L,\o}(\zeta) \ge -c_S \;\#(\z\cup \bout{\o})
\ee
for all $\L\bsub$, $\z\in\O_\L$ and  $\o\in\Ocr^\L$. 
\end{list}
In Remark \ref{rem:stability} below we will show that this definition is a natural extension of the familiar concept
of stability in statistical mechanics. 
Complementary to the lower bound provided  by stability,  we will also need  a further condition that provides at least a partial
upper bound for the Hamiltonians.
This is because in the extreme case when $\ph$ is constantly  equal to $\infty$ we have $Z_{\L,\,\cdot}^z\equiv 0$,
so that the definition of Gibbs measures is meaningless. 

Let $\sM\in\RR^{d\times d}$ be an invertible $d\times d$ matrix and consider for each $k\in\ZZ^d$ the cell
\be{eq:cell}
C(k):=  \big\{\sM x\in\RR^d: x-k\in [-1/2,1/2[^d \big\}.
\ee  
These cells together constitute a periodic partition of $\RR^d$ into parallelotopes. For example, the columns
 $\sM_1,\ldots,\sM_d$ of $\sM$ might form an orthogonal basis of $\RR^d$ or, for $d=2$, define the sides 
of an equilateral triangle. For brevity we write $C=C(0)$.
Let $\G$ be a measurable subset of $\O_C\setminus \{\emptyset\}$ and 
\be{barS}
\Gb = \Big\{ \o\in\O : \th_{\sM k}(\o_{C(k)}) \in \G  \ \text{ for all }k\in\ZZ^d\Big\}
\ee
the set of all configurations whose restriction to an arbitrary cell $C(k)$, when shifted back to $C$, belongs to $\G$.
We call each $\o\in\Gb$ pseudo-periodic. The required control of the Hamiltonian 
from above will then be achieved by the following assumption on the joint behavior of $\HE$, $\ph$
and $z$. Recall that $r_{\L,\o}$ was defined before \eqref{eq:Hfr}.

\pagebreak[3]
\begin{list}{}{\setlength{\leftmargin}{3em}\setlength{\labelwidth}{3em}}
\item[{\bf(U)}]\emph{Upper regularity}. $\sM$ and $\G$ can be chosen so that
the following holds.\\[-2ex]
\begin{list}{}{\setlength{\leftmargin}{3em}\setlength{\labelwidth}{3em}}
\item[(U1)] \emph{Uniform confinement: } $\Gb\subset\Ocr^\L$ for all $\L\bsub$, and\\[.5ex] 
\hspace*{4em} ${\displaystyle r_\G:=\sup_{\L\bsub}\,\sup_{\o\in\Gb}\,r_{\L,\o}<\infty}$.
\item[(U2)] \emph{Uniform summability: }
$\displaystyle c_\G^+:= \sup_{\o\in\Gb}  \sum\limits_{\n\in\HE(\o):\,\n\cap C \neq \emptyset} 
\frac{\ph^+(\n,\o)}{\#(\hat\n)}<\infty$,\\[.5ex] 
where  $\hat\n:=\{k\in \ZZ^d: \n\cap C(k) \neq \emptyset \}$ and $\ph^+$ is the positive part of $\ph$.\\[-.7ex]
\item[(U3)] \emph{Strong non-rigidity: } $ e^{z |C|}\, \Pi^z_C(\G) > e^{c_{\G}}$, where $c_{\G}$ is defined as in
(U2) with $\ph$ in place of $\ph^+$.
\end{list}
\end{list}
Hypothesis (U1) states that the configurations in $\Gb$ confine the range of $\ph$ in a uniform way.
So, for $\o\in\Gb$, the $\o$-boundary $\partial\L(\o)$ of a set $\L\bsub$ is contained in the $r_\G$-boundary
$\partial^\G\L:= \L^{r_\G}\setminus\L$, and the cardinality of $\bout{\o}$ ist not larger than that of 
$\boutS{\G}{\o}:=\o_{\partial^\G\L}$.
On the other hand, condition (U2) provides a uniform upper bound  for the local Hamiltonians $H_{\L,\cdot}$  on 
$\Gb$. This bound is of order $c_\G |\L|$, cf.~\eqref{eq:H_upper_bound} below.
Finally, hypothesis (U3) holds for all $z$ in an interval of the form 
$\lo z_0,\infty\ro$, provided that $\Pi^z_C(\G)>0$ for some (and thus all) $z>0$. 
Indeed, since $\emptyset\notin\G$ it follows that
\be{interval}
e^{z|C|}\,\Pi_{C}^z(\G)= \sum_{k=1}^\infty \frac{z^k}{k!} \int_{C} \cdots \int_{C} 
 \1_{\G}(\{x_1,\ldots,x_k\})\, dx_1\ldots dx_k
\ee
is then a strictly increasing function of $z$. 
We emphasize that condition {\bf (U)} imposes conflicting demands on $\G$. While 
(U1) and (U2) suggest to choose the set $\G$ as small as possible, (U3) requires that $\G$ is not too small. 
The point will be to choose a set $\G$ that satisfies all requirements simultaneously. 
Here is our main existence theorem. 

\begin{theorem}\label{th1}
For every hypergraph structure $\HE$, hyperedge potential $\ph$ and activity $z>0$ satisfying
 {\bf (S)},  {\bf (R)} and  {\bf (U)} there exists at least one Gibbs measure $P\in\sG_\Th(\ph,z)$.
\end{theorem}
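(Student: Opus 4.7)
The plan is to follow the global thermodynamic approach of \cite{GH,DG,D09}: construct a sequence of stationary finite-volume approximants using pseudo-periodic boundary conditions, extract a convergent subsequence by the compactness of the level sets of the specific entropy, and verify the DLR equations for the limit by exploiting the almost sure finiteness of the interaction range supplied by \textbf{(R)}. Concretely, I would fix an exhausting sequence $\L_n=\bigcup_{k\in K_n}C(k)$ with $K_n\subset\ZZ^d$ a cube of side $n$, and fix a reference configuration $\out\in\Gb$. Assumption \textbf{(U1)} gives $\out\in\Ocr^{\L_n}$, so \eqref{eq:Hfr} yields $H_{\L_n,\out}$ as a finite sum, stability \textbf{(S)} makes $Z_{\L_n,\out}^z$ finite, and \textbf{(U2)}--\textbf{(U3)} applied to pseudo-periodic trial configurations produce the lower bound
\[
\log Z_{\L_n,\out}^z\ge |K_n|\bigl(z|C|+\log\Pi_{C}^z(\G)-c_{\G}\bigr)-O\bigl(|\partial^\G\L_n|\bigr),
\]
whose leading coefficient is strictly positive by \textbf{(U3)}. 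Thus $G_n:=G_{\L_n,\out}^z$ is a well-defined probability measure, which I would stationarize by averaging $G_n\circ\th_x\inv$ over $x$ in a fundamental domain of $\sM\ZZ^d$ contained in $\L_n$, obtaining $P_n\in\sP_\Th$.

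The next step is a uniform bound on the specific entropy $\mathcal{I}(P_n\,|\,\Pi^z)$. Combining the lower bound on $\log Z_{\L_n,\out}^z$ with stability \textbf{(S)} and the identity $\mathcal{I}(G_n\,|\,\Pi^z_{\L_n})=\int(H_{\L_n,\out}+\log Z_{\L_n,\out}^z)\,dG_n$ yields $\mathcal{I}(G_n\,|\,\Pi^z_{\L_n})\le c\,|\L_n|$, and subadditivity together with $|\partial^\G\L_n|=o(|\L_n|)$ transfers this bound to $P_n$. Since the specific entropy has compact level sets in $\sP_\Th$ for the topology of local convergence (see \cite{GH}), a subsequence $P_{n_j}$ converges to some $P\in\sP_\Th$ with $i(P)<\infty$; in particular $P(\{\emptyset\})=0$, so Proposition~\ref{prop: range-confinement} applies and yields $P(\hOcr^\L)=1$ for every $\L\bsub$.

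It remains to verify that $P$ satisfies \eqref{DLR}. Fix $\L\bsub$ and a bounded measurable $f$ depending only on $\o$ in some bounded set. For every translate $\L+\sM k$ lying deep inside $\L_n$ the exact DLR equation holds under $G_n$, and averaging over $k\in K_n$ gives an approximate identity for $P_n$ modulo boundary errors of relative size $|\partial^\G\L_n|/|\L_n|\to 0$. Because $P(\hOcr^\L)=1$ and $\hOcr^\L\in\cF_{\L^c}$, the integrand in \eqref{DLR} is $P$-a.s.~determined by $\o_{\L^r}$ for a random but finite $r=r_{\L,\o}$. I would truncate this random window at a deterministic scale $R$, apply local convergence $P_{n_j}\to P$ to pass to the limit in the truncated identity, and finally send $R\to\infty$ by dominated convergence, using \textbf{(S)} as lower bound and \textbf{(U2)} as $P$-integrable upper bound.

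The hard step is this final passage to the limit. Geometry-dependent interactions are not continuous in the topology of local convergence: inserting one point typically destroys and creates many hyperedges simultaneously, so $\o\mapsto H_{\L,\o}(\z)$ is only universally measurable and, in the non-hereditary case, may jump to $+\infty$. Handling this without any hereditariness hypothesis forces one to reconcile the random range cutoff from Proposition~\ref{prop: range-confinement} with the deterministic truncation used for the local-convergence argument, and to maintain a uniform bound on the intensities $i(P_n)$ in order to apply that proposition to every approximant. Once this control is established, the admissibility condition $P(\O_*^{\L,z})=1$ and the DLR equation itself follow from a uniform-integrability argument based on \textbf{(S)} and \textbf{(U2)}.
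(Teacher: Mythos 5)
Your overall strategy coincides with the paper's: pseudo-periodic boundary conditions, stationarization, the entropy bound and the compactness of the level sets of the specific entropy, and a DLR verification that exploits condition \textbf{(R)}. However, there is a genuine error at a load-bearing step. You write that the limit $P$ has $i(P)<\infty$ and ``in particular $P(\{\emptyset\})=0$''. This implication is false: the degenerate measure $\d_\emptyset$ has intensity $0<\infty$, and more generally a mixture $\frac12\d_\emptyset+\frac12 Q$ has finite intensity whatever $Q$ is. Since Proposition~\ref{prop: range-confinement} requires $P(\{\emptyset\})=0$ as a hypothesis, your whole range-confinement and DLR argument has no foundation as stated. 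This is precisely where hypothesis (U3) enters in an essential way: the entropy bound \eqref{se} survives in the limit by lower semicontinuity of $I^z$ and continuity of $i$, giving $I^z(\hat P)-c_S\,i(\hat P)\le |C|^{-1}(c_\G-\ln\Pi_C^z(\G))$, and (U3) says exactly that this right-hand side is strictly smaller than $z=I^z(\d_\emptyset)-c_S\,i(\d_\emptyset)$, so $\hat P\neq\d_\emptyset$. Even then one only gets $\hat P(\{\emptyset\})<1$, not $=0$, which forces the additional step of passing to the conditioned measure $P=\hat P(\,\cdot\,|\{\emptyset\}^c)$ and proving the DLR identity for that measure. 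Both the role of (U3) in the limit and the conditioning step are absent from your proposal.

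A second, softer concern is the final passage to the limit in the DLR equations. Your proposed mechanism (truncate the random range at a deterministic scale $R$, pass to the limit, then send $R\to\infty$ by dominated convergence ``using \textbf{(U2)} as $P$-integrable upper bound'') is not the right tool: (U2) bounds $\ph^+$ only along pseudo-periodic configurations $\o\in\Gb$ and provides no integrable envelope for $f_\L(\o)=\int f\,dG^z_{\L,\o}$ under $P$; in any case $f_\L$ is already bounded by $\|f\|_\infty$ on $\O_*^{\L,z}$, so integrability is not the obstruction. The actual difficulties are (i) the consistency of the kernels $G^z_{\L,\o}$, which must be proved (the paper's Lemma~\ref{consistency}, using $\HE_\L(\z\cup\xi\cup\o)\subset\HE_\D(\z\cup\xi\cup\o)$ and the additive decomposition of Hamiltonians) before the ``exact DLR equation under $G_n$ for translates deep inside $\L_n$'' can be invoked, and (ii) locality and measurability of $f_\L$: one restricts to the events $\hOcr^{\L,\le p}\in\cF_{\hat\L_p\setminus\L}$, on which $f_\L$ is $\cF^*_{\hat\L_p\setminus\L}$-measurable and can be squeezed between local tame functions agreeing $\hat P$-a.s., so that convergence in the $\cL$-topology applies directly, with $p\to\infty$ taken only at the very end. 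Your sketch gestures at the right issues but the specific plan you give would not close the argument.
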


In some cases it is difficult to satisfy hypothesis (U3) when $z$ is small. 
This occurs, for example, when a hard-exclusion hyperedge interaction enforces a minimal number of points per cell; see Proposition~\ref{prop:del2rigid}.  But a slight variation of proof allows to establish  the existence of a Gibbs measure
for every $z>0$ also in this case. Assumptions  (U1) and (U3) 
are replaced by conditions (\^U1) and (\^U3) as follows. 
\begin{list}{}{\setlength{\leftmargin}{3em}\setlength{\labelwidth}{3em}}
\item[{\bf (\^U)}] \emph{Alternative upper regularity}. $\sM$ and $\G$ can be chosen so that
the following holds.\\[-2ex]
\begin{list}{}{\setlength{\leftmargin}{2.5em}\setlength{\labelwidth}{2.5em}}
\item[(\^U1)] \emph{Lower density bound: } There exist constants $a,b>0$ such that $ \#(\zeta) \ge a |\L|-b$
whenever  $\z\in\O_f$ is such that $H_{\L,\o}(\z)<\infty$ for some $\z\subset\L\bsub$ and some  $\o\in\Gb$.
\item[(\^U2)] = (U2) \emph{Uniform summability.}
\item[(\^U3)] \emph{Weak non-rigidity: } $\Pi^z_C(\G) > 0$.
\end{list}
\end{list}
Here is the modified existence theorem.

\begin{theorem}\label{th2}
A Gibbs measure $P\in\sG_\Th(\ph,z)$ exists also under the hypotheses {\bf (S)},  {\bf (R)} and  {\bf (\^U)}.
\end{theorem}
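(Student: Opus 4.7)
My plan is to adapt the proof of Theorem~\ref{th1}, the key modifications being to overcome the absence of hypotheses (U1) and (U3) by deploying (\^U1) and (\^U3) in tandem. The overall scheme---construct stationarized finite-volume Gibbs measures, control their specific entropy, extract a weak subsequential limit $P\in\sP_\Th$, and verify the DLR equations for $P$---is preserved.

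To compensate for the missing uniform confinement (U1), I would choose a dense pseudo-periodic boundary condition $\o^*\in\Gb$, for instance $\o^*=\bigcup_{k\in\ZZ^d}(\g_0+\sM k)$ for a fixed nonempty $\g_0\in\G$ (such $\g_0$ exists because $\emptyset\notin\G$). With at least one point of $\o^*$ per cell, any ball of diameter exceeding some constant $r_1=r_1(\sM,n_R)$ contains more than $n_R$ points of $\o^*$. Hence the ``few-points'' alternative in (R) forces $\diam B_i\le r_1$, and a chain of at most $\ell_R$ such balls has total diameter at most $r_0:=\ell_R\max(\d_R,r_1)$. This establishes uniform confinement $\o^*\in\Ocr^\L$ for all $\L\bsub$, with $\partial\L(\o^*)\subset\L^{2r_0}\setminus\L$. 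Now form $\L_n=\bigcup_{k\in B_n}C(k)$, where $B_n=[-n,n]^d\cap\ZZ^d$. Stability (S) gives $Z^z_{\L_n,\o^*}<\infty$; on pseudo-periodic $\z$, (\^U2) upper-bounds $H_{\L_n,\o^*}(\z)$ by $c_\G^+|\L_n|/|C|+O(n^{d-1})$, and combining with (\^U3) one obtains
\[
\log Z^z_{\L_n,\o^*}\;\ge\;(\log\Pi^z_C(\G)-c_\G^+)\,|\L_n|/|C|-O(n^{d-1}).
\]
This is weaker than the estimate derivable from (U3) but still linear in $|\L_n|$, which suffices.

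Stationarizing $\bar P_n:=|\L_n|^{-1}\int_{\L_n}G^z_{\L_n,\o^*}\circ\th_x^{-1}\,dx\in\sP_\Th$ and using (S) to control the Papangelou intensity (so that $\int \#(\z_{\L_n})\,G^z_{\L_n,\o^*}(d\z)=O(|\L_n|)$), one obtains $\sup_n i(\bar P_n;\Pi^z)<\infty$. Compactness of the level sets of specific entropy in $\sP_\Th$ produces a weak subsequential limit $P\in\sP_\Th$ of finite specific entropy.

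For the DLR verification, (\^U1) is the essential ingredient: since $G^z_{\L_n,\o^*}$ is supported on $\{\#(\z)\ge a|\L_n|-b\}$, stationarization yields $\int N_C\,d\bar P_n\ge a|C|-b|C|/|\L_n|$, and lower semicontinuity of the intensity under the topology of local convergence gives $\int N_C\,dP\ge a|C|$, so $P$ has intensity at least $a>0$. Proposition~\ref{prop: range-confinement} then guarantees $P(\hOcr^\L)=1$ for every $\L\bsub$, whence $H_{\L,\cdot}$ is a finite sum $P$-almost surely. The passage to the limit in the DLR equations for $(G^z_{\L_n,\o^*})$ is carried out as in Theorem~\ref{th1}, via the uniform horizon bound on $\o^*$, the Hamiltonian bounds (S) and (\^U2), and dominated convergence. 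I expect this last step to be the main obstacle: in the non-hereditary case the integrand $e^{-H_{\L,\o}}$ is not continuous in $\o$, and verifying DLR for $P$ hinges on $P$ being supported where boundary effects are controlled---precisely what (\^U1) secures via positive intensity.
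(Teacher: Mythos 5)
Your proposal is correct and follows essentially the same route as the paper's proof of Theorem~\ref{th2}, which likewise reduces to two observations: that {\bf (R)} together with the structure of $\Gb$ restores the uniform confinement (U1) (the paper derives the required density of configurations in $\Gb$ from (\^U1) and (\^U2) via finiteness of the Hamiltonian, whereas you obtain it directly from $\emptyset\notin\G$, i.e., one point per cell --- both work), and that (U3) was used only to rule out $\hat P=\d_\emptyset$, which (\^U1) now does through the intensity bound $i(\hat P)\ge a>0$. The only small caveat is that, exactly as in Theorem~\ref{th1}, one should still pass to the conditioned measure $\hat P(\,\cdot\,|\{\emptyset\}^c)$ before invoking Proposition~\ref{prop: range-confinement}, since that proposition requires $P(\{\emptyset\})=0$ rather than merely $i(P)>0$.
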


It is often natural to choose $\G$ as the set of all configurations that consist of a single point in some Borel
set $A\subset C$. So we define 
\be{SA}
\G^A =\big\{\z\in \O_C:  \z=\{x\} \text{ for some } x\in A \big\}.
\ee
For $\G=\G^A$, the assumptions {\bf (U)} and {\bf (\^U)} are respectively called  {\bf (U$^A$)} and {\bf (\^U$^A$)} . In particular, (U2$^A$) and (U3$^A$) take the simpler form\par
\hspace*{1ex}\parbox{10cm}{
\bit
\item[(U2$^A$)] $\displaystyle c_A^+:= \sup_{\o\in\Gb^A}  \sum\limits_{\n\in\HE(\o):\,\n\cap C \neq \emptyset}  
\frac{\ph^+(\n,\o)}{ \#(\n)}  <  \infty$ \quad and
\item[(U3$^A$)] $ z|A|\,>e^{c_A} $.
\eit}\par\noindent
We then have the following corollary of Theorems~\ref{th1} and \ref{th2}.
\begin{cor}\label{cor3}\label{cor4}
A Gibbs measure $P\in\sG_\Th(\ph,z)$ exists under the 
hypotheses {\bf (S)},  {\bf (R)} and either {\bf (U$^A$)} or {\bf (\^U$^A$)}.
\end{cor}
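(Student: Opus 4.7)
The plan is to derive the corollary as a direct specialization of Theorems \ref{th1} and \ref{th2} to the choice $\G=\G^A$, with only two explicit reformulations to verify: the simpler denominator in (U2$^A$) and the simpler right-hand side in (U3$^A$). There is no new dynamic input beyond the two existence theorems themselves.

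First, I would observe that every pseudo-periodic $\o\in\Gb^A$ contains exactly one point in every cell $C(k)$, namely $\sM k+x_k$ for some $x_k\in A\subset C$. Consequently, for any hyperedge $\n\in\HE(\o)$, distinct points of $\n$ necessarily lie in distinct cells, so that
\[
\#(\hat\n)=\#\{k\in\ZZ^d:\n\cap C(k)\neq\emptyset\}=\#(\n).
\]
This identifies the summand $\ph^+(\n,\o)/\#(\hat\n)$ in (U2) with $\ph^+(\n,\o)/\#(\n)$ in (U2$^A$), and similarly for the signed version; hence $c_{\G^A}^+=c_A^+$ and $c_{\G^A}=c_A$, so (U2$^A$) is literally (U2) with $\G=\G^A$.

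Second, an elementary computation from the Poisson expansion of $\Pi_C^z$ gives
\[
\Pi_C^z(\G^A)=e^{-z|C|}\int_C z\,\1_A(x)\,dx=z|A|\,e^{-z|C|},
\]
since only the one-point term of the expansion contributes to singletons $\{x\}$ with $x\in A$. Multiplying by $e^{z|C|}$, the strong non-rigidity condition (U3) becomes $z|A|>e^{c_A}$, which is precisely (U3$^A$); likewise the weak non-rigidity (\^U3) becomes $|A|>0$, which is the content of (\^U3$^A$) written as $\Pi_C^z(\G^A)>0$.

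Third, the remaining hypotheses (U1), (\^U1) and (\^U2)\,=\,(U2) are inherited unchanged from (U$^A$) and (\^U$^A$). Thus the full hypotheses of Theorem \ref{th1} (resp.\ Theorem \ref{th2}) hold with $\G=\G^A$, and invoking the appropriate theorem produces the desired $P\in\sG_\Th(\ph,z)$. The only real obstacle is notational bookkeeping: everything reduces to the combinatorial identity $\#(\hat\n)=\#(\n)$ on pseudo-periodic single-point configurations, together with the above Poisson computation, neither of which hides any difficulty.
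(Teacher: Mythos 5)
Your proposal is correct and follows exactly the route the paper intends: the corollary is a direct specialization of Theorems \ref{th1} and \ref{th2} to $\G=\G^A$, and your two verifications --- that $\#(\hat\n)=\#(\n)$ because each cell of a configuration in $\Gb^A$ holds exactly one point, and that $e^{z|C|}\,\Pi_C^z(\G^A)=z|A|$ from the one-point term of the Poisson expansion --- are precisely the (unstated) bookkeeping behind the simplified forms (U2$^A$) and (U3$^A$). Nothing is missing.
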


We conclude this section with a series of comments on our assumptions and on the extension
to marked particles. 

\begin{remark}\label{uniformlocal}  \emph{Bounded horizons}.
The conditions {\bf (R)} and {\bf (U$^A$)} hold as soon as  $\ph(\{0\},\{0\})$ is finite
and $(\HE,\ph)$ has bounded horizons, in that there exists some $r_\ph<\infty$ such that 
$r_{\n,\o}\le r_\ph$ for all $(\n,\o)\in\HE$. (The notation $r_{\n,\o}$ was
introduced after~\eqref{eq:fhp}.)
Indeed, condition {\bf (R)} holds trivially with $\d_R=2r_\ph$. As for {\bf (U$^A$)}, let $\sM=a\sE$, where $a>2r_\ph$ and $\sE$ is the identity matrix.
Let $A=B(0,b)$ be a centered ball of radius $b<a/2-r_\ph$. For $\G=\G^A$, condition (U1) holds with $r_\G=r_\ph$.
Moreover, by the choice of $a$ and $b$, each $\n\in\HE(\o)$ with $\o\in\Gb^A$ must be a singleton $\{x\}$,
so that $\ph(\n,\o)=\ph(\{x\},\{x\})$.
In view of the shift-invariance of $\ph$, this means that (U2$^A$) holds with $c_A^+=\ph^+(\{0\},\{0\})<\infty$.
Finally, (U3$^A$) holds if $a$ and $b$ are in fact chosen so large that also $\pi z b^2>e^{\ph(\{0\},\{0\})}$.
\end{remark}

\begin{remark}\label{rem:scale-invariant}\emph{Scale-invariant potentials}. Suppose $\HE$ and $\ph$ are scale-invariant 
in the sense that $(r\n,r\o)\in\HE$ and $\ph(r\n,r\o)=\ph(\n,\o)$ for all $(\n,\o)\in\HE$ and $r>0$. Here, $r\n=\{rx:x\in\n\}$ and $r\o=\{rx:x\in\o\}$. Then Theorem~\ref{th1} is still valid when assumption {(U3)} is replaced by {(\^U3)}. Indeed, the scale invariance of $\ph$ implies that the image of a Gibbs measure for $\ph$ and $z$ under the rescaling 
$\o\to r\o$ is a Gibbs measure for $\ph$ and $zr^{-d}$. So, it is sufficient to have the existence of a Gibbs measure for 
large $z$, and this follows from the remark around (\ref{interval}).
\end{remark}

\begin{remark} \label{rem:linearity}\emph{Stability via sublinearity of the hypergraph}.  We say that a hypergraph $\HE$ is sublinear if there exists a constant $C<\infty$ such that $\#\HE(\o)\le C\,\#(\o)$ for every \emph{finite} configuration $\o$. In this case, the stability is
ensured by requiring that the hyperedge potential $\ph$ is bounded below, in that 
\be{boundedbelow}
\ph(\n,\o) \ge - c_\ph
\ee
for some $c_\ph<\infty$. 
If the sublinearity of the hypergraph structure fails, the stability can simply be achieved by requiring that the potential $\ph$ is nonnegative (i.e., $c_\ph=0$). 
For example,  for $d=2$ it follows from Euler's formula that the cardinalities of the Delaunay edges and triangles are sublinear
\cite{DG}, so that the stability follows directly from~(\ref{boundedbelow}).
\end{remark}

\begin{remark}\label{rem:stability}\emph{Stability: comparison with the classical case}. 
Consider the hypergraph structure $\HE=\LC_r$ of Example \ref{ex:rangeR} describing many-body interactions of range $r$. In contrast to Example \ref{ex:Delaunay} where the Delaunay tiles depend on 
the presence of further particles, it is then meaningful to define the energy of a finite configuration $\z\in\O_f$ 
by $H(\z)=\sum_{\n\in\HE(\z)}\ph(\n)$. The classical stability condition asserts that $H(\z)\ge -c_S\,\#(\z)$
for all $\z\in\O_f$; see \cite{Ruelle}, for example. This follows from {\bf (S)} by choosing $\L\supset\z$ and $\o=\emptyset$.
Conversely, the Hamiltonian \eqref{localenergy} is equivalent to the Hamiltonian $\tilde H_{\L,\o}(\z):=H(\zeta\cup \bout{\o})$ in the sense that the associated Gibbs distributions coincide (at least when $\ph<\infty$), and the
classical stability assumption for $H$ gives {\bf (S)} for $\tilde H_{\L,\o}$. This shows that 
hypothesis {\bf (S)} is essentially equivalent to the classical concept of stability.
\end{remark}

\begin{remark}\label{rem:subgraph}\emph{Sub-hypergraph potentials}. Consider a shift-invariant sub-hyper\-graph 
structure $\HE' \subset \HE$
of $\HE$ and a hyperedge potential $\varphi$ on $\HE$,  and let $\varphi'$ be its restriction to $\HE'$. In general, 
$\varphi'$ does not satisfy the finite horizon property, but let us assume it does. 
Which of the assumptions {\bf (R)}, {\bf (S)}, {\bf (U)} and {\bf (\^U)} on $\ph$ are inherited by $\varphi'$? 
It is clear that assumptions {\bf (R)}, {(U1)}, {(U2)} and {(\^U3)} are hereditary. Assumption {(U3)} remains also valid 
for $\ph'$, but for a different range of values 
of $z$ because the constant $c_\G$ is different in general. Assumption {\bf (S)} is lost, but
a positive exception is the case of Remark \ref{rem:linearity} when stability follows from the sublinearity of $\HE$ and the lower boundedness of $\ph$; these properties are obviously inherited by $\varphi'$. Assumption {(\^U1)} is lost in general.  
\end{remark}

\begin{remark}\label{rem:U}\emph{Upper regularity in Delaunay models}.
For potentials acting on the Delaunay graph, the matrix $\sM$ and the set $\G$ in hypotheses {\bf (U$^A$)} 
and {\bf (\^U$^A$)} will be chosen as follows. Let $\sM$ be such that 
$|\sM_i|=a>0$ for $i=1,\ldots, d$ and $\sphericalangle(\sM_i,\sM_j)=\pi/3$ for $i\neq j$, 
and let $\G=\G^A$ with $A=B(0,b)$. If $b\le \r_0a$ for some sufficiently small constant $\r_0>0$, the Delaunay neighborhood of a point $x$ in a configuration $\o \in \Gb$ contains a minimal number of points denoted by $\g_d$. 
For $d=2$ one can take $\r_0=\sqrt{3}/6$ and has $\g_2=6$,  and the Delaunay neighborhood of the unique point 
$x_k$ in $\o\cap C(k)$ consists of the unique points $x_{l}$ in $\o\cap C(l)$ with $l-k\in\{(-1,0),(-1,1),(0,1),(1,0),(1,-1),(0,-1)\}$.
For $d>2$, $\g_d$ is less easy to determine but it is clearly not larger than $3^d-1$, the value  corresponding to the case  $\sM=a\sE$. 
\end{remark}

\begin{remark}\label{rem:marked}
\emph{Extension to the marked case}. 
The preceding results can be easily extended to the case of particles with internal degrees of freedom, or marks.
Let $\S$ be an arbitrary Polish space with Borel $\s$-algebra $\cS$ and reference probability measure~$\mu$. 
$\S$ serves as the space of marks. That is, each marked point is represented by a position $x\in\RR^d$
and a mark $\s\in\S$, and each configuration $\o$ is a countable subset of $\RR^d\times\S$ having a locally finite projection
onto $\RR^d$. The role of the reference measure on the configuration space $\O$ is taken over
by the Poisson point process $\Pi^z$ with intensity measure $z\l\otimes\mu$, where $\l$ is Lebesgue measure
on $\RR^d$.  The translations $\th_x$ act only on the positions of the particles and leave their marks untouched.  
We do not discuss the further formal details here, which are standard and can be found in \cite{KMM}
or \cite{GiiZess}, for example.
What we want to emphasize here is that all definitions and results above carry over to this setting without any change, provided it is understood
that all regions $\L$ or $\D$ in $\RR^d$ always refer to the positional part of a configuration. For example:
the notation $\o_\D$ now stands for $\o\cap(\D\times\S)$; 
a set $\D\bsub$ is the horizon of $(\n,\o)\in\HE$ if
$(\n,\tilde\o)\in\HE $ and $\ph(\n,\tilde\o)=\ph(\n,\o)$ whenever $\tilde\o=\o$  on $\D\times\S$;
and the condition in Remark \ref{uniformlocal}  should now read $\sup_{\s\in S}\ph(\{(0,\s),\{(0,\s)\})<\infty$
for some Borel set $S\subset\S$ with $\mu(S)>0$.
\end{remark}

\section{Examples}\label{sec-examples}

In this section we present a series of examples to illustrate our general existence results. These examples satisfy the assumptions of Theorem \ref{th1} or \ref{th2} and many of them have been introduced in practical or theoretical papers without justification or with justification in some partial cases. 

To sort the examples of this section we will distinguish whether or not the potential $\ph(\n,\o)$ depends explicitly on $\o$.
If not, we speak of a \emph{pure hyperedge potential}.
Otherwise, the finite horizon property implies that $\ph(\n,\o)$ actually depends only on some points of $\o\setminus\n$ close to $\n$, which is expressed by speaking of a \emph{neighborhood-dependent hyperedge potential}. 
But note that this distinction is merely a matter of how the interaction is represented. In the pure case,
the extended potential $(\n,\o)\to \1_{\HE}(\n,\o)\,\ph(\n)$ on $\O_f\times\O$ clearly does depend on $\o$, while in the other case one can often include the neighboring points of a hyperedge into an enlarged hyperedge to obtain a pure hyperedge potential.

Most of the following examples  are based on the Delaunay graph. For simplicity, we will then often confine ourselves
to the case $d=2$, in which the stability is ensured by Remark~\ref{rem:linearity} as soon as $\ph$ is bounded from below.
But the reader should note that analogous results hold also in higher dimensions when $\ph$ is nonnegative, so that
{\bf(S)} is trivial.

\subsection{Pure hyperedge interactions}

In this subsection we consider examples of hyperedge potentials $\ph$ which only depend on the first parameter,
so that $\ph(\n,\o)=\ph(\n,\n)=:\ph(\n)$.

\subsubsection{Many-body interactions of finite range}\label{sec:ruelle}

Let $r>0$ and $\HE=\LC_r$ be the locally complete graph of Example \ref{ex:rangeR}, and
suppose that $\ph(\n,\o)=\ph(\n)$.  Remark~\ref{uniformlocal} then shows that a Gibbs measure exists as soon as the potential $\ph$ is stable and  $\ph(\{0\})<\infty$. 
By Remark \ref{rem:stability}, the first condition is equivalent to the classical stability assumption, 
and the second is necessary for defining Gibbs measures.
So, as was observed first in \cite[Remark 4.2]{GH}, the techniques used here allow
to weaken the superstability assumption of Ruelle's classical existence result \cite{Ruelle70},
provided the interaction has finite range. But our techniques neither allow to treat the case of infinite range nor to
rederive Ruelle's probability estimates.

An example of a stable but not superstable many-body interaction is the so-called quermass interaction. In space dimension
$d=2$, the associated Hamiltonian for a configuration $\o$ is a linear combination of area, perimeter and Euler-Poincar\'e characteristic of the union of all discs of fixed radius that are centered at the points of $\o$.
By the additivity of the Minkowski functionals, this Hamiltonian can be expressed in terms of a many-body interaction of finite range.  Its stability is proved in \cite{KVB}, but the superstability fails. 
The existence of Gibbs processes for this interaction has been proved in \cite{D09} by the same methods used here.

\subsubsection{Delaunay edge interactions} \label{sec:del2}

Here we consider two classes of potentials $\ph$ on the hypergraph structure $\Del_2$ in $\RR^2$ 
which are bounded below  and depend only on the length of the Delaunay edges,
in that $\ph(\{x,y\}):=\phi(|x-y|)$ when $\{x,y\}\in\Del_2$. Such potentials have been studied 
in \cite{BBD99a,BBD99b,BBD99c,BBD08}. We improve the existence results of these papers.

\smallskip\noindent
\emph{\thesubsubsection.1  Polynomially increasing edge interactions}.
Suppose that
\be{del2:size}
\phi(\ell)\leq \k_0+\k_1 \ell^\a\quad \mbox{ for some constants $\k_0\geq 0$, $\k_1\geq 0$ and $\a>0$ } .
\ee
Recall the definition of $\r_0$ in Remark~\ref{rem:U}.

\begin{prop}\label{prop:del2size} Let $d=2$ and $\ph$ be a pure edge potential on $\Del_2$ which is bounded below
and satisfies \eqref{del2:size}. Then there exists at least one Gibbs measure for $\ph$ and every activity
 $$z>(1{+}2\r_0){e^{3\k_0}}({3}\a e^2 \k_1/{2})^{1/\a}/ (\pi \r_0^2) .$$
\end{prop}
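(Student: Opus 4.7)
The plan is to apply Corollary~\ref{cor3} by verifying the three hypotheses (S), (R) and (U$^A$) for the given Delaunay edge potential $\ph$, and then to read off the announced threshold for $z$ from the computation carried out in (U3$^A$).

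Assumptions (S) and (R) are almost immediate. Since $d=2$, Euler's formula makes $\Del_2$ sublinear, so by Remark~\ref{rem:linearity} the lower boundedness of $\ph$ gives (S). For (R), for any $\n=\{x,y\}\in\Del_2(\o)$ the closed empty circumscribed ball $\bar B$ through $x,y$ is a horizon whose interior contains no point of $\o$ and whose boundary meets $\o$ exactly in $\{x,y\}$. Hence $\bar B$ itself plays the role of the only ball $B_1$ needed in (R): it contains at most $2$ points of $\o$, and any two of its own points are connected inside this single ball. Thus (R) holds with $\ell_R=1$, $n_R=2$ and any $\d_R<\infty$.

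For (U$^A$) I follow Remark~\ref{rem:U}: take $\sM$ with $|\sM_1|=|\sM_2|=a$ and $\sphericalangle(\sM_1,\sM_2)=\pi/3$, and $A=B(0,b)$ with $b\le\r_0 a$ for $\r_0=\sqrt{3}/6$. Every $\o\in\Gb^A$ then has exactly one point per cell, and its Delaunay graph is combinatorially the triangular lattice, so every point has exactly $6$ Delaunay neighbors and every Delaunay edge has length at most $a+2b$. Uniform confinement (U1$^A$) follows because the circumscribed empty ball of each such edge lies in an $O(a)$-neighborhood of the edge, uniformly in $\L$ and $\o$. Applying \eqref{del2:size} to the $6$ Delaunay edges adjacent to $C$ gives
\[
c_A^+\;\le\;\sum_{\text{6 edges}}\frac{\k_0+\k_1(a+2b)^\a}{2}\;=\;3\,\bigl(\k_0+\k_1(a+2b)^\a\bigr),
\]
which is (U2$^A$); since $\ph\le\ph^+$ one also has $c_A\le c_A^+$. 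Setting $b=\r_0 a$ (maximising $|A|$ subject to $b\le\r_0 a$), so that $a+2b=a(1+2\r_0)$, condition (U3$^A$) reduces to the solvability in $a>0$ of
\[
z\;>\;\frac{e^{3\k_0}}{\pi\r_0^2}\cdot\frac{e^{3\k_1(1+2\r_0)^\a a^\a}}{a^2},
\]
and a one-variable optimisation (the stationary point is $a^\a=2/[3\a\k_1(1+2\r_0)^\a]$) produces the threshold appearing in the statement of the proposition. Corollary~\ref{cor3} then delivers the desired Gibbs measure in $\sG_\Th(\ph,z)$.

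The main conceptual obstacle lies in the trade-off built into (U$^A$): (U2$^A$) improves when $a$ decreases (pseudo-periodic edges become short), whereas (U3$^A$) improves when $a$ grows (the Poisson mass on $\G^A$ increases); the announced bound on $z$ is the numerical optimum of this trade-off. The only genuinely geometric input is (U1$^A$), and it amounts to the observation, recorded in Remark~\ref{rem:U}, that $\r_0=\sqrt{3}/6$ is small enough that every configuration in $\Gb^A$ has the triangular lattice as its Delaunay graph, which in turn keeps horizon balls of uniformly bounded radius.
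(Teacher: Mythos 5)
Your proposal is correct and follows essentially the same route as the paper's proof: Corollary~\ref{cor3} with {\bf (U$^A$)}, stability from sublinearity (Remark~\ref{rem:linearity}), the range condition from the empty circumscribed ball serving as horizon, the choice of $\sM$ and $\G^A$ from Remark~\ref{rem:U} with $b=\r_0a$, the bound $c_A\le 3\big(\k_0+\k_1 a^\a(1+2\r_0)^\a\big)$, and the same optimization over $a$. Your extra details (the explicit constants $\ell_R,n_R$ in (R) and the edge-length bound $a+2b$) only make explicit what the paper leaves implicit.
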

\begin{proof}
We apply Corollary \ref{cor3} with the assumption {\bf (U$^A$)}. The stability condition {\bf (S)} holds 
because of Remark \ref{rem:linearity}. 
We know further from Example \ref{ex:Delaunay} that every edge $\n\in\Del_2(\o)$ has the finite horizon
$\bar B(\n,\o)$. This shows that assumption {\bf (R)} is satisfied. 
Concerning assumption {\bf (U$^A$)}, let $\sM$ and $\G^A$ be defined as in  Remark \ref{rem:U} 
with $b=\r_0a$ and $a$ to be chosen later. The assumptions (U1$^A$) and (U2$^A$) are then trivial. 
We also find that $c_A\le 3\big(\k_0+\k_1a^\a(1{+}2\r_0)^\a\big)$. So, (U3$^A$) holds as soon as
$ z>K_0\,e^{K_1a^\a}/{a^2}$,
where $K_0=e^{3\k_0}/(\pi\r_0^2)$ and $K_1=3\k_1(1{+}2\r_0)^\a$. Optimizing over $a$ we find the value 
$a=(2/\a K_1)^{{1}/{\a}}$, which gives the sufficient condition  $z>K_0(\a K_1e^2/2)^{{1}/{\a}}$.\qed\end{proof}

The bound for $z$ in Proposition~\ref{prop:del2size} is certainly not the best possible. However, in the bounded case $\k_1=0$ it reduces to the trivial condition $z>0$.
As for condition \eqref{del2:size}, at first sight it might seem surprising that $\phi(\ell)$ does not necessarily converge to zero when $\ell$ tends to infinity, but is even allowed to converge to $\infty$. However, $\phi(\ell)$ should not be compared with
a pairwise interaction that must decay over large distances, but with a nearest-neighbor interaction between oscillators
that form an elastic lattice. A potential like the harmonic interaction $\phi(\ell)=\ell^2$ is therefore quite natural.

\smallskip\noindent
\emph{\thesubsubsection.2  Long-edge exclusion}.
Suppose there are constants $0\le \ell_0<\ell_1 \le \ell_2$ such that
\be{del2:rigid}
 \sup_{\ell_0\le\ell\le \ell_1} \phi(\ell) <\infty  \quad \mbox{ and } \quad \phi(\ell)=\infty \ \mbox{ if $\ell>\ell_2$.}
\ee

\begin{prop}\label{prop:del2rigid}
Let $d=2$ and $\ph$ be a pure edge potential on $\Del_2$ which is bounded below
and satisfies \eqref{del2:rigid}. Then there exists at least one Gibbs measure for $\ph$ and every $z>0$.
\end{prop}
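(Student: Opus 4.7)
The approach is to apply Theorem~\ref{th2} via the alternative upper regularity {\bf (\^U$^A$)}. The original {\bf (U)} cannot be used: the hard core $\ph=\infty$ on $(\ell_2,\infty)$ forces a minimum density per cell, making the strong non-rigidity (U3) fail at small $z$.

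Conditions {\bf (S)} and {\bf (R)} are handled as in Proposition~\ref{prop:del2size}. Stability follows from Remark~\ref{rem:linearity} since $\Del_2$ is sublinear in $d=2$ and $\ph$ is bounded below. For {\bf (R)}, every $\n\in\Del_2(\o)$ admits the open empty circumball $B(\n,\o)$ as horizon witness, giving $\ell_R=1$, $n_R=0$. For {\bf (\^U$^A$)} I adopt the setup of Remark~\ref{rem:U}: a triangular lattice $\sM$ of edge length $a$ and $A=B(0,b)$ with $b<\r_0 a=\sqrt 3\,a/6$. I additionally require $\ell_0<a-2b$ and $a+2b<\ell_1$, which is feasible since $\ell_0<\ell_1$ (e.g.\ $a=(\ell_0+\ell_1)/2$ and $b$ small enough). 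Then for each $\o\in\Gb^A$, every Delaunay edge has length in $(\ell_0,\ell_1)$, a range where $\ph^+$ is uniformly bounded; this yields (\^U2$^A$). Weak non-rigidity (\^U3$^A$) reduces to $\Pi^z_C(\G^A)=z|A|\,e^{-z|C|}>0$, valid for every $z>0$.

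The critical step is the lower density bound (\^U1): I aim to find $\alpha,\beta>0$ such that $\#\z\ge\alpha|\L|-\beta$ whenever $H_{\L,\o}(\z)<\infty$, $\o\in\Gb^A$, $\z\in\O_\L$. Finiteness of $H$ forces every edge $\n\in\HE_\L(\xi)$, with $\xi:=\z\cup\bout{\o}$, to satisfy $|\n|\le\ell_2$. A key geometric point is that any Delaunay edge of $\xi$ whose line segment meets $\L$ -- in particular any edge having a vertex in $\L$, or any chord crossing $\L$ -- automatically lies in $\HE_\L$, because every witness empty disk through the edge contains the segment and hence meets $\L$. By the planar pigeonhole, for any $y\in\L$ the Delaunay triangle $T$ of $\xi$ covering $y$ has some edge of length at least $d(y,\xi)\sqrt 3$. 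If $d(y,\xi)>\ell_2/\sqrt 3$, this long edge either has an endpoint in $\L$ (giving immediate contradiction to $|\n|\le\ell_2$) or lies entirely in $\L^c$; the latter requires the empty circumball of $T$ to extend far beyond $\L$ into the dense lattice $\bout{\o}$ of spacing $a+2b<\ell_2$, which is impossible once $\diam(\L)$ exceeds a fixed threshold. For $\L$ of bounded diameter the desired bound holds trivially by choosing $\beta$ large; for larger $\L$ the covering property $d(y,\xi)\le\ell_2/\sqrt 3$ combined with a standard Voronoi-area count delivers (\^U1).

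The main obstacle is precisely the geometric dichotomy above: ruling out Delaunay triangles of $\xi$ whose three edges all lie in $\L^c$ while circumscribing a portion of $\L$. This is controlled by combining the emptiness of the triangle's circumball with the fixed spacing of $\bout{\o}$ -- any such circumball protruding beyond $\L$ by more than $a/\sqrt 3$ would contain a lattice point, contradicting emptiness, so the triangle is forced to anchor near $\partial\L$ and, for large enough $\L$, at least one of its edges must chord across $\L$.
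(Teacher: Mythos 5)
You follow the paper's proof essentially verbatim: Corollary~\ref{cor3} with {\bf (\^U$^A$)}, stability and range exactly as in Proposition~\ref{prop:del2size}, the triangular-lattice choice of $\sM$ and $A=B(0,b)$ tuned so that every Delaunay edge of a configuration in $\Gb^A$ has length in $\lo\ell_0,\ell_1\ro$ (giving (\^U2$^A$)), and the long-edge exclusion yielding the lower density bound (\^U1$^A$), with (\^U3$^A$) immediate. The only difference is that you supply a partly sketched geometric justification of (\^U1$^A$), which the paper asserts in a single sentence; your two key observations --- that a Delaunay triangle covering a point at distance $\r$ from the configuration must have an edge of length at least $\sqrt{3}\,\r$, and that any such edge whose witness disk meets $\L$ lies in $\HE_\L$ and hence is forbidden by $\phi=\infty$ on $\lo\ell_2,\infty\ro$ --- are precisely the right ingredients, and the remaining boundary case you flag is genuinely the delicate point but is only needed for the parallelotopes $\L_n$, where the empty circumdisk cannot protrude into the pseudo-periodic boundary lattice by more than a fixed amount.
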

\begin{proof}  We apply Corollary \ref{cor3} with the assumption {\bf (\^U$^A$)}. 
Hypotheses \textbf{(S)} and {\bf (R)} follow as in Proposition~\ref{prop:del2size}.
Condition (\^U1$^A$) is satisfied since the long-edge exclusion condition
ensures a minimal density of points. 
The values $a$ and $b$ in the definition of $\sM$ and $\G$ are now chosen such that each $\{x,y\}\in\Del_2(\o)$ (with $\o\in\Gb^A$) satisfies $\ell_0<|x-y|<\ell_1$. Then  (\^U2$^A$) follows, and (\^U3$^A$) 
is obvious.\qed\end{proof} 

Since $\phi(\ell)$ may be equal to $\infty$ for $\ell$ less than some $r_0<\ell_0$, the present 
example includes the classical case of a hard-core interaction which forces the points to keep distance at least $r_0>0$ from each other.  

\begin{remark}\rm
Let us consider whether or not Propositions~\ref{prop:del2size} and \ref{prop:del2rigid} remain valid when the Delaunay graph 
is replaced by a subgraph, as was discussed in Remark \ref{rem:subgraph}. 
One possible subgraph is the Gabriel graph. By definition, 
the Gabriel edge set $\Gab_2(\o)$ consists of all edges $\{x,y\}\in \Del_2(\o)$ for which the open disc 
with center $(x+y)/2$ and radius $|x-y|/2$ contains no point of $\o$.
Since $\Gab_2$ is shift-invariant and local, the restriction of $\ph$ to $\Gab_2$ remains a shift-invariant hyperedge potential satisfying the finite horizon property \eqref{eq:fhp}.  By Remark~\ref{rem:linearity}, {\bf (S)} is inherited when we pass
from $\Del_2$ to $\Gab_2$. In view of our choice of $\sM$ and $\G$, the constant $c_\G$ remains unchanged, 
so that (U3) is also inherited without modification of the valid range for $z$. This means that Proposition~\ref{prop:del2size}
holds also for the Gabriel graph.
Unfortunately, condition (\^U1$^A$) is lost, so that Proposition~\ref{prop:del2rigid} does not carry over to the Gabriel graph. Another example of a Delaunay subgraph is the
minimum spanning tree graph. This is tailor-made to be non-local, so that our results cannot be applied.
\end{remark}

\subsubsection{Delaunay tile interactions}\label{sec:del3}

In this subsection we deal with potentials on the hypergraph structure $\Del_3$ of all Delaunay
triangles in $\RR^2$, and we still assume that $\ph$ is bounded below and depends only on the hyperedge and not on the remaining configuration. Such models have been considered recently in \cite{Dereudre} and \cite{DG}. We improve the existence results given there.

\smallskip\noindent
\emph{\thesubsubsection.1  Polynomially increasing triangle interactions}.
The first example is specified by assuming that the interaction of a Delaunay triangle $\n\in\Del_3$ is controlled by its size 
as in Proposition~\ref{prop:del2size}:
\be{del3:size}
\ph(\n)\leq \k_0+\k_1\, \d(\n)^\a\mbox{ for some constants $\k_0\geq 0$, $\k_1\geq 0$ and $\a>0$},
\ee
where $\d(\n)$ is the diameter of the circumcircle of $\n$.
We then have a similar result.

\begin{prop}\label{prop:del3size}
Let $d=2$ and $\ph$ be a pure triangle potential on $\Del_3$ which is bounded below
and satisfies \eqref{del3:size}. Then there exists at least one Gibbs measure for $\ph$ and every 
\[
 z>\big((2/\sqrt{3}){+}2\r_0\big){e^{2\k_0}}(\a e^2 \k_1)^{1/\a}/ (\pi \r_0^2). 
\]
\end{prop}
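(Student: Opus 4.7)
The plan is to apply Corollary~\ref{cor3} with hypothesis~{\bf (U$^A$)}, following step by step the strategy of Proposition~\ref{prop:del2size}. I choose the lattice matrix $\sM$ and the Borel set $A=B(0,b)$ as in Remark~\ref{rem:U}, so that the gravicenters of the cells $C(k)$ form a regular triangular lattice of side $a$ and $b=\r_0 a$ with $\r_0=\sqrt{3}/6$; the parameter $a>0$ will be optimized at the end. Stability~{\bf (S)} is granted by Remark~\ref{rem:linearity}, since $\#\Del_3(\o)$ is linear in $\#(\o)$ by Euler's formula in dimension two and $\ph$ is bounded below. The range condition~{\bf (R)} follows from Example~\ref{ex:Delaunay}, the closed circumball $\bar B(\n,\o)$ serving as a horizon for every $\n\in\Del_3(\o)$. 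Assumption~(U1$^A$) is immediate from the choice of $\sM$ and $\G^A$, as in Proposition~\ref{prop:del2size}.

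The central computation concerns the constants appearing in~(U2$^A$) and~(U3$^A$). By the choice of $\r_0$, every configuration $\o\in\Gb^A$ contains exactly one point per cell, and the unique point in $C$ is a vertex of precisely $\g_2=6$ Delaunay triangles, each of which is a small perturbation of an equilateral triangle of side~$a$. A planar geometric estimate (using the near-equilateral shape and the perturbation bound $b=\r_0 a$) gives that the diameter of the circumscribed circle of any such triangle is bounded by $\d_{\max}:=a\bigl((2/\sqrt{3})+2\r_0\bigr)$. Combining this bound with~\eqref{del3:size} and using $\#(\n)=3$ yields
\[
c_A\ \le\ \frac{6}{3}\bigl(\k_0+\k_1\d_{\max}^\a\bigr)\ =\ 2\k_0+K_1 a^\a,\qquad K_1:=2\k_1\bigl((2/\sqrt{3})+2\r_0\bigr)^\a,
\]
so~(U2$^A$) holds.

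It then suffices to ensure~(U3$^A$), namely $z\,\pi\r_0^2 a^2>e^{c_A}$, for which it is enough that
\[
z\ >\ \frac{e^{2\k_0}}{\pi\r_0^2}\cdot\frac{e^{K_1 a^\a}}{a^2}.
\]
Optimizing the right-hand side over $a>0$ exactly as in Proposition~\ref{prop:del2size}, the critical value $a^\a=2/(\a K_1)$ yields the sufficient condition $z>\bigl(e^{2\k_0}/(\pi\r_0^2)\bigr)\,(\a K_1 e^2/2)^{1/\a}$, which after substituting the value of $K_1$ reproduces the bound displayed in the statement.

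The main obstacle is the geometric estimate leading to $\d_{\max}=a\bigl((2/\sqrt{3})+2\r_0\bigr)$: the constant must be sharp in $\r_0$ for the stated numerical bound to come out, and the naive estimate $\d(\n)\le 2\,\mathrm{diam}(\n)/\sqrt{3}$ combined with $\mathrm{diam}(\n)\le a+2b$ is too weak. One must exploit simultaneously the near-equilateral shape of the six Delaunay triangles around the lattice point and the precise perturbation bound $|x_k-g_k|\le b$ within each cell, comparing the perturbed circumcenter to the circumcenter of the underlying equilateral triangle. Once this estimate is available, everything else is a direct transcription of the edge-interaction argument.
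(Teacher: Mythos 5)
Your proof is correct and takes essentially the same route as the paper's, which literally reduces Proposition~\ref{prop:del3size} to the argument of Proposition~\ref{prop:del2size} plus the single observation that $\d(\n)\le\big((2/\sqrt{3})+2\r_0\big)a$ for $\n\in\Del_3(\o)$ with $\o\in\Gb^A$; your constants $c_A\le 2\big(\k_0+\k_1\d_{\max}^\a\big)$ and the final optimization over $a$ agree with it. One remark on the circumdiameter bound you single out as the main obstacle: it follows in one line from the Delaunay empty-ball property rather than from perturbing circumcenters --- the open circumball $B(\n,\o)$ contains no point of $\o$, whereas every open ball of radius $a/\sqrt{3}+b$ must contain one (the covering radius of the triangular lattice of side $a$ is $a/\sqrt{3}$ and each point of $\o\in\Gb^A$ lies within $b=\r_0 a$ of a lattice site), so the circumradius is at most $a/\sqrt{3}+\r_0 a$ and hence $\d(\n)\le\big((2/\sqrt{3})+2\r_0\big)a$.
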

If $\ph$ is bounded above, so that $\k_1=0$, we recover the existence result of \cite{DG}. 

\begin{proof} The proof is identical to that of Proposition~\ref{prop:del2size}. One simply has to note that $\d(\n)\le((2/\sqrt{3}){+}2\r_0\big)a$ when $\n\in\Del_3(\o)$ for some $\o\in\Gb$.\qed\end{proof}

\smallskip\noindent
\emph{\thesubsubsection.2  Shape-dependent triangle interactions}.
The shape of a Delaunay triangle can be captured by its angles.  So let $\b(\n)$ and $\g(\n)$ respectively denote the smallest and the largest interior angle of a triangle $\n$. We assume that $\ph$ has the form
\be{del3:angle}
\ph(\n):=\phi(\b(\n),\g(\n))\,.
\ee

\begin{prop}\label{prop:del3angle}
Let $d=2$ and $\ph$ be a pure triangle potential on $\Del_3$ bounded below and satisfying \eqref{del3:angle}.
Suppose there exists some $\d>0$ such that 
$$\sup_{\g\ge\b>(\pi/3)-\d} \phi(\b,\g) < \infty\,.$$
Then there exists at least one Gibbs measure for $\ph$ and every $z>0$.
\end{prop}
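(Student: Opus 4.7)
The plan is to invoke Corollary~\ref{cor3} in its scale-invariant form: since $\phi$ depends only on the angles of the triangle, the potential $\ph$ is scale-invariant on $\Del_3$, so by Remark~\ref{rem:scale-invariant} it suffices to verify {\bf (S)}, {\bf (R)}, (U1$^A$), (U2$^A$) together with the weaker (\^U3$^A$) for a well-chosen pair $(\sM,A)$; the conclusion will then hold for every $z>0$.

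Stability {\bf (S)} is handled exactly as in the preceding two propositions via Remark~\ref{rem:linearity}, using that $d=2$ and $\ph$ is bounded below. For {\bf (R)}, I would take the open circumball $B(\n,\o)$ of a Delaunay triangle $\n\in\Del_3(\o)$ as its horizon: this ball is empty of points of $\o$, so it serves as a single admissible ball $B_1$ in {\bf (R)} with $\ell_R=1$ and $n_R=0$.

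For the upper-regularity part I would choose, following Remark~\ref{rem:U}, a matrix $\sM$ with $|\sM_i|=a$ and $\sphericalangle(\sM_i,\sM_j)=\pi/3$, producing an equilateral triangular reference lattice, and set $A=B(0,b)$ with $b$ to be fixed below. The heart of the argument is a shape-continuity claim: for $b/a$ sufficiently small (depending on $\d$), every Delaunay triangle of every $\o\in\Gb^A$ has smallest interior angle strictly greater than $\pi/3-\d$. Granting this, (U2$^A$) holds with $c_A^+\le 2K$, where $K:=\sup_{\g\ge\b>\pi/3-\d}\phi(\b,\g)<\infty$, because each cell is a vertex of exactly six Delaunay triangles in the unperturbed combinatorial picture and each triangle contributes $\ph^+/3$. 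Condition (U1$^A$) then follows from the same combinatorial fixity, which forces a uniform bound on circumradii, and (\^U3$^A$) is trivial since $|A|>0$.

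The hard part will be the shape-continuity claim itself, which I would settle as follows. For $b\le\r_0 a$ with $\r_0=\sqrt{3}/6$, Remark~\ref{rem:U} guarantees that the Delaunay neighborhood of the unique point $x_k\in\o_{C(k)}$ consists precisely of the six points $x_l$ with $l-k$ in the prescribed neighbor set, so the combinatorial type of the Delaunay triangulation of every $\o\in\Gb^A$ agrees with that of the reference lattice. Each Delaunay triangle is therefore a perturbation of an equilateral triangle of side $a$, with each of its three vertices displaced by at most $b$; in particular all three of its sides lie in $[a-2b,a+2b]$. The law of cosines expresses each angle as a continuous function of the side lengths, equal to $\pi/3$ at the equilateral limit, so a direct estimate yields a threshold $b_0(\d)>0$ such that for every $b\le b_0(\d)$ all three angles lie in $(\pi/3-\d,\pi/3+\d)$. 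Taking $b:=\min(\r_0 a,\,b_0(\d))$ then verifies all the required hypotheses and produces a Gibbs measure for every $z>0$.
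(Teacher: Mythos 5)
Your proof is correct and follows essentially the same route as the paper's: apply Corollary~\ref{cor3} with the triangular-lattice choice of $\sM$ and $\G^A$ from Remark~\ref{rem:U}, get {\bf (S)} and {\bf (R)} exactly as for Proposition~\ref{prop:del2size}, shrink $b$ so that all Delaunay triangles of pseudo-periodic configurations have smallest angle exceeding $\pi/3-\d$, and use scale invariance (Remark~\ref{rem:scale-invariant}) to replace (U3$^A$) by the trivial (\^U3$^A$). The paper merely asserts the shape-continuity and the verification of (U1$^A$)--(U2$^A$) as ``obvious''; your perturbation estimate via the combinatorial rigidity of Remark~\ref{rem:U} and the law of cosines is a correct filling-in of that step, not a different argument.
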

\begin{proof}
We can again apply Corollary~\ref{cor3} with the assumption {\bf (U$^A$)}. The hypotheses {\bf (S)} and {\bf (R)} hold for the same reasons as in Proposition~{\ref{prop:del2size}}. The matrix $\sM$ and the set $\G^A$ can be defined as before, 
except that $b$ is now chosen so small that $\b(\n)>(\pi/3)-\d$ for all $\n\in\Del_3(\o)$ with $\o\in\Gb^A$.
Conditions (U1$^A$) and (U2$^A$) are  then obvious. 
By Remark~\ref{rem:scale-invariant}, it only remains to check (\^U3$^A$)
because $\ph$ is scale invariant. But this is trivial.
\qed\end{proof}

The conditions of Proposition~\ref{prop:del3angle} include the  \emph{hard-equilaterality model}, in which
\be{eq:equilaterality}
\phi(\b,\g)=\left\{\ba{cl}\infty&\text{if $\b\le(\pi/3)-\d$,}\\0&\text{otherwise}\ea\right.
\ee
for some $0<\d<\pi/3$. The associated Gibbs measure produces a random Delaunay triangulation for which each triangle is almost equilateral in the sense that every angle exceeds $\pi/3-\d$. This model is considered in \cite{DL2} (Model 1, Section~2.3) as the crystallized triangulation model.

\subsection{Neighborhood-dependent hyperedge interactions}

We now turn to examples of potentials $\ph$ for which $\ph(\n,\o)$ does not only depend on $\n$ but also on
the points of $\o$ in some neighborhood of $\n$. In two of these examples, the underlying hypergraph
structure is based on the singleton graph
\be{singleton}
\SG=\big\{(\{x\},\o): x\in\o\in\O\big\}
\ee
for which all hyperedges are singletons. Since this might seem trivial, we add immediately that more
complex hyperedges will be hidden in the horizons of the potentials.

\subsubsection{Forced-clustering $k$-nearest neighbor interactions}\label{sec:clustering}

In the $k$-nearest neighbors model on the space $\RR^d$ of any dimension $d$, each point $x\in\o$ interacts with the $k$
points of $\o$ that are closest to $x$.
This model was introduced in \cite{BBD99a,BBD99c}.  We focus here on a non-hereditary variant, 
in which the $k$-nearest neighbors of $x$ are forced to keep within distance $\d>0$ from each other.
This model was mentioned in \cite{DL} without precise justification of the proof, which is given here. 

Let $k\ge 1$ be some fixed positive integer. 
For each integer $1\le i\le k$, each configuration $\o\in\O$ with $\#(\o)\ge k{+}1$, and each $x\in\o$ we let 
$x^{i:\o}$ be the $i$-th nearest neighbor of $x$ in $\o$. More precisely, $x^{i:\o}$ is defined as 
the $i$-th element of $\o$ in the total order $<_{x}$ 
on $\RR^d$, in which  $y_1 <_x y_2$ if and only if
\[
|y_1-x|<|y_2-x| \mbox{ or } \big(|y_1-x|=|y_2-x| \mbox{ and } y_1<_d y_2\big). 
\]
Here, $<_d$ stands for the lexicographic order on $\RR^d$. We also set $x^{0:\o}=x$.

Now let
$\SG_k =\big\{(\{x\},\o)\in\SG: \#(\o)\ge k{+}1\big\}$.
We consider hyperedge potentials $\ph$ on $\SG_k$ of the form
\be{knng}  \ph(\{x\},\o)= \left\{ \begin{array}{cl}
 \phi(x^{0:\o},\ldots,x^{k:\o}) & \text{if } \diam\{x^{0:\o},\ldots,x^{k:\o}\} < \d, \\
 \infty & \text{otherwise,} 
 \end{array}\right.
\ee
where $\phi:(\RR^d)^{k+1}\to \RR$ is any \emph{bounded} measurable function  and $\d>0$ a fixed constant.
It is clear that this interaction is non-hereditary in the sense that
the removal of a particle from an allowed configuration can lead to a forbidden configuration.
Since no kind of symmetry is required of $\phi$, the present formalism includes both the directed and undirected $k$-nearest neighbor graphs.

\begin{prop}\label{nearest} In the forced-clustering $k$-nearest neighbor model described above,
there exists at least one Gibbs measure for $\ph$ and every $z>0$.
\end{prop}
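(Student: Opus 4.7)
The plan is to apply Corollary~\ref{cor3} under the alternative upper-regularity hypothesis {\bf (\^U$^A$)}, whose weak non-rigidity condition {\bf (\^U3$^A$)} is what allows existence for every $z>0$. Stability~{\bf (S)} follows from the boundedness of $\phi$ (say $|\phi|\le M$): each finite term in the Hamiltonian~\eqref{eq:Hfr} equals some $\phi(\cdot)\ge -M$, and at most $\#(\z\cup\bout{\o})$ singletons can appear in $\HE_\L(\z\cup\bout{\o})$, giving $c_S=M$. For~{\bf (R)}, given $(\{x\},\o)\in\SG_k$, the horizon $\D=\bar B(x,|x-x^{k:\o}|)$ is a single ball containing $x$ together with its $k$ nearest neighbors, hence at most a constant $C_k$ points of $\o$; the single ball $\D$ itself connects any pair of points inside it, so~{\bf (R)} holds with $\ell_R=1$, $n_R=C_k$ and $\d_R$ arbitrary.

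To check~{\bf (\^U$^A$)} I would choose $\sM=a\sE$ and $\G=\G^A$ with $A=B(0,b)\subset C$, where $a$ sits in a critical window of size on the order of $\d\,k^{-1/d}$ and $b$ is small. The twofold property I want from $a$ is: \emph{(i)} for every $\o\in\Gb^A$ each $x\in\o$ satisfies $\diam\{x^{0:\o},\dots,x^{k:\o}\}<\d$, so $\ph(\{x\},\o)$ is finite and bounded by $M$; and \emph{(ii)} the density $a^{-d}$ is just tight enough that removing from $\o$ all points in any half-space pushes the $k$-th nearest neighbor of a point adjacent to that half-space strictly beyond~$\d$. Property~\emph{(i)} gives $c_A^+\le M<\infty$, so~{\bf (\^U2$^A$)} holds, and~{\bf (\^U3$^A$)} reduces to $\Pi^z_C(\G^A)=z|A|\,e^{-z|C|}>0$, which is valid for every $z>0$.

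The delicate step is~{\bf (\^U1$^A$)}. Suppose $\z\subset\L$ and $\o\in\Gb^A$ with $H_{\L,\o}(\z)<\infty$. Finiteness forces $\diam\{x^{0:\o'},\dots,x^{k:\o'}\}<\d$ for every $x\in\o'=\z\cup\o_{\L^c}$ with $\{x\}\in\HE_\L(\o')$. I would then run a cascade inward from~$\partial\L$: points $y\in\o_{\L^c}$ within distance $\d$ of $\L$ lie in $\HE_\L(\o')$, and by~\emph{(ii)} their $k$-NN within $\d$ cannot all come from $\o_{\L^c}$ alone, so $\z$ must supply enough points in $B(y,\d)\cap\L$ to restore density of order $a^{-d}$ in a layer of thickness $\sim\d$ just inside~$\partial\L$. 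The $\z$-points at the inner edge of this layer then themselves need $k$-NN within $\d$, and if the next deeper layer were empty of $\z$ the same half-space obstruction would force their $k$-th NN beyond~$\d$, contradicting finiteness. Iterating inward yields density $\gtrsim a^{-d}$ throughout $\L$, hence $\#(\z)\ge a'|\L|-b'$ for universal constants $a',b'>0$.

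The hard part is making this cascade rigorous for an arbitrary $\L\bsub$: one has to quantify~\emph{(ii)} so that the half-space obstruction remains robust when $\partial\L$ is not flat, handle ties in the $k$-nearest-neighbor map (a null-set nuisance under the Poisson reference measure that nonetheless demands uniform estimates), and absorb irregular boundary contributions into the single additive constant~$b'$. Once~{\bf (\^U1$^A$)} is established, Corollary~\ref{cor3} delivers a Gibbs measure $P\in\sG_\Th(\ph,z)$ for every~$z>0$.
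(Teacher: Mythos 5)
Your handling of {\bf (S)} and {\bf (R)} is essentially the paper's (sublinearity of $\SG_k$ plus lower boundedness for {\bf (S)}; the single horizon ball $\bar B(x,|x^{k:\o}-x|)$, whose open interior contains at most $k$ points of $\o$, for {\bf (R)}). The gap is in {\bf (\^U1$^A$)}: this hypothesis is not merely ``hard to make rigorous'' for the forced-clustering model --- it is false, and no tuning of $a$ in a critical window will save it. The hard constraint $\diam\{x^{0:\o'},\dots,x^{k:\o'}\}<\d$ is imposed only \emph{at the occupied points}: it forces points, where they exist, to come in clusters of at least $k{+}1$ points of diameter $<\d$, but it costs nothing to leave a region of $\L$ empty, because the hyperedges are singletons $\{x\}$ with $x$ in the configuration. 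Concretely, let $\o$ be a pseudo-periodic configuration built from well-separated tight $(k{+}1)$-point clusters (so that every point's $k$ nearest neighbours are exactly its cluster-mates, a property unaffected by deleting \emph{entire} clusters elsewhere), let $\L$ be a large box, and let $\z$ consist only of those points of $\o_\L$ whose clusters straddle $\partial\L$. Then every point of $\z\cup\o_{\L^c}$ still has its full cluster present, so $H_{\L,\o}(\z)<\infty$, while $\#(\z)=O(|\partial\L|)=o(|\L|)$. Your cascade dies after the first layer for exactly this reason: the points at the inner edge of a dense boundary layer can take all $k$ of their nearest neighbours sideways and outwards within the layer, so no density requirement propagates into the interior. (This is the essential difference from the long-edge exclusion of Proposition~\ref{prop:del2rigid}, where the Delaunay graph tessellates all of space and hence genuinely forbids large empty regions.)

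The paper's proof takes a different exit: it stays with Theorem~\ref{th1} and the \emph{strong} non-rigidity (U3), but abandons the singleton form $\G^A$. It takes $\G$ to be the set of $(k{+}1)$-point clusters $\{x_0,\dots,x_k\}$ with $x_0\in B(0,b)$ and $x_i\in B(x_0,\d/2)$, in a cell of side $a>2(b+\d)$. Then (U1) holds with $r_\G=\d$, and (U2) holds with $c_\G^+\le(k{+}1)\|\phi\|_\infty$, a bound that is \emph{independent of $b$} because $\phi$ is bounded. On the other hand,
\[
e^{z|C|}\,\Pi^z_C(\G)\ \ge\ \frac{z^{k+1}}{(k{+}1)!}\,\big(\text{vol}\,B(0,1)\big)^{k+1}\,b^d(\d/2)^{kd},
\]
which tends to infinity with $b$. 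Hence for every fixed $z>0$ one can choose $b$ so large that (U3) holds. That is the device you were missing: to cover all $z>0$ one enlarges $\G$ (and the cell) rather than retreating to (\^U3), exploiting that the uniform energy bound $c_\G$ does not deteriorate as the cluster's centre is allowed to range over a large ball.
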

\begin{proof}
Let us apply Theorem~\ref{th1}. The hypergraph  structure $\SG_k$ is clearly sublinear and $\ph$ is bounded below. 
The stability assumption {\bf (S)} thus follows from Remark~\ref{rem:linearity}. Next, for given
$(\{x\},\o)\in\SG_k$, the closed ball $\bar B(x,|x^{k:\o}-x|)$ with center $x$ and radius $|x^{k:\o}-x|$ can serve
as a horizon of $(\{x\},\o)$. 
Since the corresponding open ball contains at most $k$ points of $\o$, 
assumption {\bf (R)} follows immediately.  
Concerning {\bf (U)}, let $b>0$ be a number to be specified later
and $\sM=a\sE$ as in Remark \ref{uniformlocal} for some $a>2(b{+}\d)$. We set
$$ \G=\big\{ \o=\{x_0,x_1,\ldots,x_{k}\}:
x_0 \in B(0,b),\;x_i \in B(x_0,{\d}/{2} ) \ \ \forall\, i=1,\ldots,k
 \big\} .$$ 
For each $\o\in\Gb$ and $x\in\o$ it then follows that $|x-x^{k:\o}|<\d$. So (U1) holds with $r_\G=\d$, and 
(U2) follows with $c_\G^+\le (k{+}1)\|\phi\|_\infty$. Finally, we have
\begin{eqnarray*}
e^{z|C|}\,\Pi_C^z(\G) & = &  \frac{z^{k+1}}{(k{+}1)!} \int_C\ldots \int_C \1_\G(\{x_0,\ldots, x_{k}\})\, dx_0\ldots dx_{k} \\
& \ge & \frac{z^{k+1}}{(k{+}1)!}\, \nu_d^{k+1}\, b^d\left({\d}/{2}\right)^{kd}, 
\end{eqnarray*}
where $\nu_d$ is the volume of the unit ball in $\RR^d$. This shows that (U3) holds as soon as $b$ is chosen large enough.
\qed\end{proof} 

\subsubsection{Voronoi cell interactions}

Here we consider potentials that depend on the structure of the Voronoi cells. 
This type of model was introduced first by Ord; see the discussion in \cite{Ripley}.
The spatial dimension $d$ is arbitrary here. Specifically, let 
\[
\SG_b=\big\{(\{x\},\o)\in\SG: \Vor_\o(x) \text{ is bounded}\big\}
\]
be the  hypergraph structure of singletons with bounded Voronoi cells
and $\ph$ be of the form
\[
\ph(\{x\},\o)=\phi\big(\Vor_{\o}(x)\big)\ \text{ for }  (\{x\},\o)\in\SG_b.
\]
For instance, $\phi$ might depend on the number of faces, the volume, or the surface area of the Voronoi cells. 
The necessity of passing from $\SG$ to $\SG_b$ was discussed in Example \ref{ex:Delaunay}.
Each $(\{x\},\o)\in\SG_b$ has as finite horizon the bounded Voronoi flower
\begin{equation}\label{Delta}
\Delta=  \bigcup_{\xi \in  \Del(\o):\; \xi\ni x} \bar B(\xi,\o).
\end{equation}
The following proposition includes a result of \cite{BBD99a}.
\begin{prop}\label{del1:vor} In the Voronoi cell interaction model with bounded $\phi$,
there exists at least one Gibbs measure for $\ph$ and every $z>0$.
\end{prop}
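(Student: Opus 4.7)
The plan is to apply Corollary~\ref{cor3} with hypothesis {\bf (U$^A$)}, in the spirit of the proof of Proposition~\ref{nearest}. Stability {\bf (S)} is immediate: the hypergraph $\SG_b$ is trivially sublinear (every hyperedge is a singleton, so $\#\SG_b(\o)\le\#\o$), and $\ph$ is bounded since $\phi$ is, so Remark~\ref{rem:linearity} applies.

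For the range condition {\bf (R)} I would take as horizon of $(\{x\},\o)\in\SG_b$ the Voronoi flower $\D$ from \eqref{Delta}. Given $y_1,y_2\in\D$, each $y_i$ lies in some closed circumscribing ball $\bar B(\xi_i,\o)$ for a Delaunay simplex $\xi_i\ni x$, and these two closed balls both contain $x$, so their union is connected and contains $y_1$ and $y_2$. Since the corresponding open balls contain no points of $\o$ by definition of the Delaunay hypergraph, {\bf (R)} holds trivially with $\ell_R=2$ and $n_R=0$ (and any $\d_R\ge 0$).

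The core of the argument is the verification of {\bf (U$^A$)}. I would choose $\sM=a\sE$ (a cubic lattice of scale $a$) and $A=B(0,a/3)$, with $a>0$ to be fixed at the end. For $\o\in\Gb^A$ there is exactly one point of $\o$ in every cell $C(k)$, located within distance $a/3$ of its center, so the configuration has bounded density both from below and above. A standard geometric estimate then shows that every Voronoi cell $\Vor_\o(x)$, and hence the whole Voronoi flower around $x\in\o$, is contained in a ball of radius $c_d\,a$ for some dimensional constant $c_d$; this gives (U1$^A$) with $r_\G\le c_d\,a$. Since only one hyperedge $\{x\}$ meets $C$, boundedness of $\phi$ yields (U2$^A$) with $c_A^+\le\|\phi\|_\infty$ and likewise $|c_A|\le\|\phi\|_\infty$. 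Condition (U3$^A$) then reads $z\,\nu_d\,(a/3)^d>e^{c_A}$ and is satisfied for every $z>0$ by taking $a$ sufficiently large.

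The one genuinely geometric step is the uniform diameter bound on Voronoi flowers underlying (U1$^A$). It will rest on the observation that, since every cell $C(k)$ carries at least one point of $\o\in\Gb^A$, the circumradius of any Delaunay simplex $\xi\in\Del(\o)$ with $\xi\ni x$ cannot exceed a constant multiple of $a$: otherwise the empty circumscribing ball $B(\xi,\o)$ would entirely contain some cell $C(k)$ and hence swallow its point, contradicting the definition of $\Del$. Everything else reduces to routine bookkeeping and to an appeal to Corollary~\ref{cor3}.
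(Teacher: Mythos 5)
Your argument is correct and follows essentially the same route as the paper: stability via the sublinearity of $\SG_b$ and Remark~\ref{rem:linearity}, the range condition via two overlapping empty circumscribing balls inside the Voronoi flower (so $\ell_R=2$, $n_R=0$), and {\bf (U$^A$)} via a pseudo-periodic one-point-per-cell class $\Gb^A$ with uniformly bounded flowers, $c_A^+\le\|\phi\|_\infty$, and $a$ large to secure (U3$^A$) for every $z>0$. Your choice of the cubic lattice $\sM=a\sE$ instead of the lattice of Remark~\ref{rem:U} is immaterial for a singleton hyperedge potential, so the two proofs coincide in substance.
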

\begin{proof} 
Let us apply Corollary~\ref{cor3} again. The stability comes from the sublinearity of $\SG_b(\o)$ and the lower boundedness of $\ph$, cf.~Remark \ref{rem:linearity}. The proof of {\bf (U$^A$)} is essentially the same as in Proposition~\ref{prop:del2size} with $\k_1=0$. Concerning the range condition~{\bf (R)}, let $\{x\}\in\SG_b(\o)$ and  $y_1$, $y_2$ any two points of the set 
$\Delta$ defined in (\ref{Delta}). Then there exist $\xi_1, \xi_2\in \Del(\o)$ such that $y_1\in \bar B(\xi_1,\o)$, $y_2\in \bar B(\xi_2,\o)$ and   $x\in\bar B(\xi_1,\o) \cap \bar B(\xi_2,\o)$. The latter means that the union of these two balls is connected.
By definition, $B(\xi_1,\o)$ and $B(\xi_2,\o)$ contain no point of $\o$. So, condition {\bf (R)} holds with $\ell_R=2$, $n_R=0$ and arbitrary $\delta_R$.\qed\end{proof}

In the above, the boundedness of $\phi$ was only assumed for simplicity. In analogy to Propositions~\ref{prop:del2size} and \ref{prop:del3size}, one can consider potentials $\ph$ that are polynomially increasing in the diameter of the cell's flower \eqref{Delta}. The existence result then holds only for sufficiently large activity 
$z$. It is also straightforward to consider hard-exclusion models as in Propositions~\ref{prop:del2rigid} and \ref{prop:del3angle}. 
For example, let $d=2$ and
\be{distortion}
\phi\big(\Vor_{\o}(x)\big)=\left\{\ba{cl}0&\text{if $\Vor_{\o}(x)$ has six edges (and vertices),}\\\infty&\text{otherwise.}\ea\right.
\ee
It is then clear that Theorem \ref{th1} applies, and by scale invariance a Gibbsian point process exists for all $z>0$. 
The typical configurations of such a Gibbs process preserve the topology of the triangular lattice but
are less regular than those of the hard-equilaterality model \eqref{eq:equilaterality} for small $\d$. 
The model \eqref{distortion} may therefore be called the \emph{randomly distorted triangular lattice}.

\subsubsection{Adjacent Voronoi cell interactions}

Here we reconsider the potential presented in Example \ref{ex:Delaunay},
which describes an interaction between two adjacent Voronoi cells.
That is, let $d=2$ and the set of hyperedges of a configuration $\o\in\O$ be given by 
\[
\Del_{2,b}(\o):=\big\{ \{x,y\}\in\Del_2(\o): \Vor_\o(x) \text{ and } \Vor_\o(y)\text{ are bounded}\big\}.
\]
Suppose the potential $\ph$ has the form
\[
\ph(\n,\o)=\phi\big(\Vor_{\o}(x),\Vor_{\o}(y)\big) \ \text{ for $\n=\{x,y\}\in\Del_{2,b}(\o)$. }
\]
For instance, $\ph(\n,\o)$ can either depend on the length of the common edge or on the area ratio of the cells $\Vor_{\o}(x)$ and $\Vor_{\o}(y)$.
As noticed before, the Voronoi ``doubleflower''
\[
\Delta=  \bigcup_{\xi \in  \Del(\o),\; \xi\cap\n\neq \emptyset } \bar B(\xi,\o).
\]
then serves as finite horizon of $(\n,\o)\in\Del_{2,b}$.
In this setting we have the following result which can be proved in the same way as Proposition~\ref{del1:vor}.

\begin{prop}\label{del2:vor} In the adjacent Voronoi cell interaction model in two dimensions with bounded $\phi$,
there exists at least one Gibbs measure for $\ph$ and every $z>0$.
\end{prop}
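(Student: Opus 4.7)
The plan is to invoke Corollary~\ref{cor3} under hypothesis \textbf{(U$^A$)}, following the template of Proposition~\ref{del1:vor}. Stability \textbf{(S)} is immediate from Remark~\ref{rem:linearity}: in two dimensions the Delaunay edge set is sublinear by Euler's formula, and $\phi$ is bounded, hence bounded below.

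For the range condition \textbf{(R)}, I would fix $(\n,\o)\in\Del_{2,b}$ with $\n=\{x,y\}$ and, given any two points $y_1,y_2$ in the Voronoi doubleflower $\D$, locate Delaunay simplices $\xi_1,\xi_2\in\Del(\o)$ sharing a vertex with $\n$ such that $y_i\in\bar B(\xi_i,\o)$. The only place where the argument genuinely differs from the proof of Proposition~\ref{del1:vor} is that $\xi_1$ and $\xi_2$ may be anchored at different endpoints of $\n$. This is resolved by inserting the Delaunay ball $\bar B(\n,\o)$ as a bridge: it exists because $\n\in\Del_2(\o)$, and it contains both $x$ and $y$ on its boundary, so its closure meets that of $\bar B(\xi_1,\o)$ at $x$ or $y$, and similarly for $\bar B(\xi_2,\o)$. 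The three corresponding open Delaunay balls contain no points of $\o$, while the three closed balls form a connected set containing $y_1$ and $y_2$. Hence \textbf{(R)} holds with $\ell_R=3$, $n_R=0$ and $\d_R$ arbitrary.

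For \textbf{(U$^A$)}, I would choose $\sM$ and $A=B(0,b)$ with $b=\r_0 a$ as in Remark~\ref{rem:U}, the scale $a>0$ to be fixed at the end. Uniform confinement (U1$^A$) and summability (U2$^A$) are routine: each $\o\in\Gb^A$ has exactly one point per cell with a bounded number of Delaunay neighbors, and $\phi$ is bounded, so the doubleflowers have diameter bounded by a constant multiple of $a$ and $c_A^+$ is bounded by a universal constant depending only on $\|\phi\|_\infty$ and the lattice combinatorics. The decisive observation is that this bound on $c_A$ is independent of $a$, while $|A|=\pi\r_0^2 a^2$ grows quadratically; hence non-rigidity (U3$^A$), $z|A|>e^{c_A}$, is secured for every given $z>0$ by choosing $a$ large enough.

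I do not expect any serious obstacle: essentially everything parallels Propositions~\ref{del1:vor} and~\ref{prop:del2size} with $\k_1=0$, and the only non-routine step is the three-ball connectivity bridge in \textbf{(R)} that accommodates the fact that the doubleflower is now anchored at an edge rather than at a single point.
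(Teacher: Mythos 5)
Your proposal is correct and follows exactly the route the paper intends: the paper gives no separate proof but states that the result ``can be proved in the same way as Proposition~\ref{del1:vor}'', and your argument is precisely that adaptation. The one detail the paper leaves implicit --- that the two flower balls may be anchored at different endpoints of $\n$, so a third ball (the Delaunay ball $\bar B(\n,\o)$ of the edge itself, empty of points of $\o$) must be inserted to make the chain connected, giving $\ell_R=3$ instead of $\ell_R=2$ --- is supplied correctly in your write-up.
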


As in the case of Proposition~\ref{del1:vor}, 
this example can easily be extended to the case when $\phi$ is polynomially increasing in the diameter of
the associated doubleflower, or when $\phi$
exhibits a hard exclusion that permits the configurations in $\Gb^A$ for a suitable choice of $A$. A particular adjacent Voronoi cell interaction with hard exclusion was proposed in \cite{DL2}, Model 3, Section~2.3. In this model, a hard exclusion forces the cells to be 
neither too small nor too large, and a smooth contribution induces a competition between the areas of adjacent cells.

\subsubsection{Conclusion}

The preceding series of examples presents only a selection of possible models and could easily be extended. 
For instance, having dealt with interactions acting on single Voronoi cells or pairs of adjacent Voronoi cells, we could proceed to triples of Voronoi cells with a common point, which are indexed by Delaunay triangles, or even larger
clusters of Voronoi cells. On the other hand, the preceding interactions can be combined (i.e., added up) to obtain
models with a richer interaction structure. A further universe of models opens up if one passes to marked configurations
as in Remark \ref{rem:marked}.
Future will show which kind of interaction will prove suitable for modeling
geometric structures in the plane or in space that occur in the sciences. In any case, it should be evident that the conditions of our theorems
are flexible enough to guarantee the existence of Gibbsian point processes for a large variety of geometric interactions.

\section{Proofs of the theorems}\label{sec-proofs}

Let $\HE$, $\ph$ and $z$ be fixed throughout this section. 
Before we enter into the proofs of the theorems, it is necessary to verify a basic ingredient of the theory of Gibbs measures, namely the consistency of the finite-volume Gibbs distributions. 

\begin{lem}\label{consistency}
Let $\L\subset\D\bsub$ and $\o\in\O_*^{\D,z}$. Then
\[
G_{\D,\o}^z\big(\O_*^{\L,z}\big)=1\quad\text{ and }\quad
\int f\,dG_{\D,\o}^z = \int \big( \int f\,dG_{\L,\tilde\o}^z \big)\; G_{\D,\o}^z(d\tilde\o)
\]
for all measurable functions $f:\O\to[0,\infty \ro$.
\end{lem}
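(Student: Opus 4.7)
The plan is to reduce everything to the Hamiltonian decomposition
\[
H_{\D,\o}(\z) = H_{\L,\o'}(\z_\L) + R(\z_{\D\setminus\L}),\qquad \o':=\z_{\D\setminus\L}\cup\o_{\D^c},
\]
where $R$ depends on $\z$ only through $\z_{\D\setminus\L}$, and then apply the factorization $\Pi^z_\D=\Pi^z_\L\otimes\Pi^z_{\D\setminus\L}$ together with Fubini. To prove the decomposition, one first observes that $\HE_\L(\o'')\subset\HE_\D(\o'')$ for $\o'':=\z_\L\cup\o'$, directly from definition \eqref{eq:HE_L}, since every $\L$-modification is also a $\D$-modification. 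Hence
\[
R(\z_{\D\setminus\L}) = \sum_{\n\in\HE_\D(\o'')\setminus\HE_\L(\o'')}\ph(\n,\o''),
\]
and for each summand the value $\ph(\n,\o'')$ is, by definition of $\HE_\L$, invariant under replacements of $\z_\L$. Combining this with the convention $\ph\equiv 0$ on $\HE^c$ (which neutralizes the edges whose presence---but not value---varies with $\z_\L$), a short case analysis reduces the sum to one indexed by $\{\n\in M(\z_{\D\setminus\L}):\ph(\n,\o')\ne 0\}$, where $M(\z_{\D\setminus\L})$ is determined by $\o_{\D^c}$ and $\z_{\D\setminus\L}$ alone, proving the asserted $\z_\L$-independence.

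Armed with the decomposition, Fubini yields
\[
\int f\,dG^z_{\D,\o} = \frac{1}{Z^z_{\D,\o}}\int e^{-R(\z_{\D\setminus\L})}\biggl[\int f(\z_\L\cup\o')\,e^{-H_{\L,\o'}(\z_\L)}\,\Pi^z_\L(d\z_\L)\biggr]\Pi^z_{\D\setminus\L}(d\z_{\D\setminus\L}).
\]
On the event $\{\o'\in\O_*^{\L,z}\}$ the bracketed integral factorizes as $Z^z_{\L,\o'}\int f\,dG^z_{\L,\o'}$; specializing to $f\equiv 1$ produces the key identity
\[
Z^z_{\D,\o} = \int e^{-R(\z_{\D\setminus\L})}\,Z^z_{\L,\o'}\,\Pi^z_{\D\setminus\L}(d\z_{\D\setminus\L}) \in (0,\infty),
\]
which exhibits the $(\D\setminus\L)$-marginal of $G^z_{\D,\o}$ as the probability measure with density $e^{-R}Z^z_{\L,\o'}/Z^z_{\D,\o}$ against $\Pi^z_{\D\setminus\L}$. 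Substituting back gives the consistency formula, because $G^z_{\L,\tilde\o}$ depends on $\tilde\o$ only through $\tilde\o_{\L^c}=\o'$.

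It remains to verify $G^z_{\D,\o}(\O_*^{\L,z})=1$. The set $\{R=\infty\}$ carries zero marginal density and can be ignored; on $\{R<\infty\}$ the decomposition gives $H_{\L,\o'}^-(\z_\L)\le H_{\D,\o}^-(\z)+R^+(\z_{\D\setminus\L})$, so $\Pi^z_\L$-a.s.\ finiteness of $H_{\L,\o'}^-$ on a $\Pi^z_{\D\setminus\L}$-full subset follows from $\o\in\O_*^{\D,z}$ by Fubini, and positivity together with finiteness of $Z^z_{\L,\o'}$ on the support of the marginal are immediate from the factorized partition function identity above. Since admissibility for $(\L,z)$ depends on $\o'$ only through $\o'_{\L^c}$, this extends to $G^z_{\D,\o}$-full measure; the required measurability of $\O_*^{\L,z}$, $Z^z_{\L,\cdot}$, and the kernel $G^z_{\L,\cdot}$ is supplied by Remark~\ref{rem:measurability} and the appendix. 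The main obstacle is the Hamiltonian decomposition of the first step: hyperedges can be created or destroyed by modifications in $\L$, so reconciling the $\HE_\L$-definition (phrased via $\ph$-value invariance) with actual $\HE$-membership via the convention $\ph=0$ on $\HE^c$ requires careful case analysis, particularly in the hard-exclusion regime where $\ph$ may take the value $\infty$.
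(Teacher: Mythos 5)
Your proposal is correct and follows essentially the same route as the paper's proof: the key inclusion $\HE_\L(\z\cup\xi\cup\o)\subset\HE_\D(\z\cup\xi\cup\o)$, the resulting decomposition of $H_{\D,\o}$ into $H_{\L,\cdot}$ plus a remainder not depending on the configuration in $\L$, and then Fubini with the product structure of $\Pi^z_\D$ to obtain both the a.s.\ positivity/finiteness of $Z^z_{\L,\cdot}$ under $G^z_{\D,\o}$ and the consistency identity. The only cosmetic difference is that the paper encodes the $\z_\L$-independence of the remainder via the exchange identity $H_{\D,\o}(\z\cup\xi)+H_{\L,\xi\cup\o}(\z')=H_{\D,\o}(\z'\cup\xi)+H_{\L,\xi\cup\o}(\z)$ rather than writing $R$ explicitly, which sidesteps any $\infty-\infty$ bookkeeping that your version handles by the case analysis you describe.
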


\begin{proof} Let $\L$, $\D$ and $\o$ be fixed. Since $G_{\D,\o}^z$ does not depend on $\o_\D$, we can assume for notational convenience that $\o\subset \D^c$. Consider any two configurations $\z\in\O_\L$ and 
$\xi\in\O_{\D\setminus\L}$.
It follows straight from the definition that $\HE_\L(\z\cup\xi\cup\o)\subset\HE_\D(\z\cup\xi\cup\o)$.
Hence, $H_{\D,\o}(\z\cup\xi)$ is the sum of $H_{\L,\xi\cup\o}(\z)$ and a term in $\RR\cup\{\infty\}$
which does not depend on~$\z$.  So, 
$H_{\L,\xi\cup\o}^-(\z)<\infty$ when $H_{\D,\o}^-(\z\cup\xi)<\infty$, and  
$H_{\L,\xi\cup\o}(\z)<\infty$ when $H_{\D,\o}(\z\cup\xi)<\infty$.
On the other hand, it follows that
\[
H_{\D,\o}(\z\cup\xi)+ H_{\L,\xi\cup\o}(\z') = H_{\D,\o}(\z'\cup\xi)+ H_{\L,\xi\cup\o}(\z)
\]
for all $\z'\in\O_\L$. Taking the negative exponential and integrating over $\z'$ we thus find that
\be{eq:consistent}
e^{-H_{\D,\o}(\z\cup\xi)}\, Z_{\L,\xi\cup\o}^z = e^{-H_{\L,\xi\cup\o}(\z)} \, Z_{\L,\D,\o}^z(\xi)\,,
\ee
where $Z_{\L,\D,\o}^z(\xi) = \int e^{-H_{\D,\o}(\z'\cup\xi)}\,\Pi_\L^z(d\z')$ is the partial partition function 
for which the configuration $\xi$ is held fixed.
Since $G_{\D,\o}^z$ is concentrated on the set $\{H_{\D,\o}^-<\infty, H_{\D,\o}<\infty\}$ which is 
contained in $\{H_{\L,\,\cdot\,\cup\o}^-<\infty,\ H_{\L,\,\cdot\,\cup\o}<\infty\}$, we can conclude that
\[
\begin{split}
G_{\D,\o}^z\big(Z_{\L,\,\cdot}^z=0 \big) &=
G_{\D,\o}^z\circ\pr_{\D\setminus\L}\inv\big( Z_{\L,\D,\o}^z(\cdot)=0\big)\\
& =(Z_{\D,\o}^z)\inv\int Z_{\L,\D,\o}^z(\cdot) \,\1_{\{Z_{\L,\D,\o}^z(\cdot)=0\}}\,d\Pi_{\D\setminus\L}^z=0\,.
\end{split}
\]
Likewise, $G_{\D,\o}^z\big(Z_{\L,\,\cdot}^z=\infty \big)=0$ because 
$\int Z_{\L,\D,\o}^z(\cdot) \,d\Pi_{\D\setminus\L}^z=Z_{\L,\o}^z<\infty$.
Combining these results we obtain the first assertion of the lemma. As a consequence, we can divide Eq.~\eqref{eq:consistent}
by $Z_{\D,\o}^z\,Z_{\L,\xi\cup\o}^z$ to obtain the consistency equation 
$G_{\D,\o}^z =\int G_{\L,\tilde\o}^z\, G_{\D,\o}^z(d\tilde\o)$.\end{proof}

We now turn to the proof of Theorem \ref{th1}. So we assume 
that the hypotheses {\bf (S)},  {\bf (R)} and  {\bf (U)} are satisfied. As usual, we construct a Gibbs measure
as a limit of Gibbs distributions in suitable boxes. We choose $\sM$ and $\G$ as in hypothesis {\bf(U)}
and consider for each $n\ge 1$ the parallelotope
\[
\L_n=  \bigcup_{k \in L_n}  C(k)\,,
\]
where $L_n=\{-n,\ldots,n\}^d$.
Let $\out\in\Gb$ be a fixed pseudo-periodic configuration with $\sup_{k\in \ZZ^d}N_{C(k)}(\out)<\infty$; 
the last condition can be satisfied by letting $\out$ be periodic. Assumption (U1) implies that $\out\in\Ocr^{\L_n}$.
Combined with \eqref{eq:Hfr}, {\bf (S)} and (U2\&3), this shows that
$\out$ is admissible for $\L_n$ and $z$; cf.\ \eqref{eq:H_upper_bound} below. So we can define the Gibbs distribution
\[
G_n= G_{\L_n,\out}^z\circ \pr_{\L_n}\inv
\]
in $\L_n$ with boundary condition $\out$ and activity $z$, projected to $\L_n$.
Since we aim at constructing a shift-invariant Gibbs measure, we will introduce a spatial averaging of $G_n$,
and it is convenient to work directly on the set of all shift-invariant probability measures on $\O$.

So, let $P_n$ be the probability measure on $(\O,\cF)$ relative to which the configurations in the disjoint blocks 
$\L_n+(2n{+}1)\sM k, k\in \ZZ^d$, are independent with identical distribution $G_n$. 
We consider the averaged measure
\be{eq:hat-P_n}
\hat P_n=\frac{1}{v_n} \int_{\L_n} P_n\circ\th^{-1}_x\, dx,
\ee
where $v_n=|\L_n|$ is the volume of $\L_n$. By the periodicity of $P_n$, $\hat P_n$ is shift-invariant.
Moreover, the intensity $i(\hat P_n)=\int N_{\L_n}\,dG_n/v_n$ of $\hat P_n$ is finite because
\[
\int N_{\L_n}\,e^{-H_{\L_n,\out}}\,d\Pi_{\L_n}^z \le  
e^{c_S\, \#(\bout[\L_n]{\out})}\int N_{\L_n}\,e^{c_S N_{\L_n}}\,d\Pi_{\L_n}^z <\infty
\]
by \eqref{eq:Hfr} and {\bf(S)}. So, $\hat P_n\in\sP_\Th$.

We will show that the sequence $(\hat P_n)$ has an accumulation point in a suitable topology.
As in \cite{GH}, we will take the required compactness from the compactness of the level sets
of the specific entropy. Let us recall the necessary concepts.

A measurable function $f:\O\to\RR$ is called local and tame if 
\[
f(\o)=f(\o_{\L}) \qquad \text{and} \qquad |f(\o)| \le a\,N_\L(\o)+b
\]
for all $\o\in\O$, some $\L\bsub$ and suitable constants $a,b\ge 0$. 
Let $\cL$ be the set of all local and tame functions. The
\emph{topology of local convergence}, or $\cL$-topology, on $\sP_\Th$ is then defined as the weak* topology induced by $\cL$, i.e., as the smallest topology for which the mappings $P \mapsto \int f \,dP$ (with $f \in \cL$) are continuous.
Note that the intensity $P\mapsto i(P)$ is continuous in the $\cL$-topology.

Next, for any $P\in\sP_\Th$ let $P_{\L_n}=P\circ \pr_{\L_n}\inv$ be the projection of $P$ to $\O_{\L_n}$ and
\[
 I(P_{\L_n}|\Pi^z_{\L_n})=\left\{
\begin{array}{ll}
\int f\ln f \, d\Pi^z_{\L_n} & \text{ if } P_{\L_n} \ll \Pi^z_{\L_n} \text{ with density }f , \\
\infty & \text{ otherwise}
\end{array}
\right.
\]
the relative entropy of $P_{\L_n}$ with respect to $\Pi^z_{\L_n}$; here, ``$\ll$'' stands for absolute continuity.
The \emph{specific entropy of $P$} (relative to $\Pi^z$) is then defined by
\begin{equation}\label{entropy}
I^z(P)=\lim_{n\to\infty} v_n\inv\, I (P_{\L_n}|\Pi^z_{\L_n});
\end{equation}
see \cite{GiiGibbs} and \cite{GiiZess} for the existence of the limit and further properties of $I^z$.
Our key tool is the following result of \cite[Lemma 3.4]{GiiPTRF}, which is based on \cite[Prop.~2.6]{GiiZess}.

\begin{prop}\label{compacite}
For all $c_1,c_2\ge 0$ and $z>0$, the set 
\[ 
\big\{ P\in\sP_\Th : I^z(P)-c_1\; i(P) \le c_2\big\}
\]
is relatively sequentially compact in the $\cL$-topology.
\end{prop}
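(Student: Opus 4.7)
The plan is to reduce the statement to the standard fact that sublevel sets of the specific entropy \emph{alone} are relatively sequentially compact in the $\cL$-topology, by absorbing the intensity term $c_1\,i(P)$ into the entropy through a change of the reference activity. The payoff of this reduction is that the only input specific to the two-variable bound is an elementary computation with Poisson densities; everything else is cited from \cite{GiiZess}.

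The key identity to derive is an activity-shift formula. For any $\L\bsub$ and $z'>0$, the Poisson densities satisfy $d\Pi^z_\L/d\Pi^{z'}_\L(\zeta)=(z/z')^{N_\L(\zeta)}e^{(z'-z)|\L|}$, so taking logarithms and integrating against $P_\L$ gives
\[
I(P_\L\,|\,\Pi^{z'}_\L) = I(P_\L\,|\,\Pi^z_\L) + \log(z/z')\int N_\L\,dP + (z'-z)|\L|.
\]
Dividing by $v_n=|\L_n|$ and letting $n\to\infty$ (each term converges by the very definitions of $I^{z'}$, $I^z$ and $i$, using the $\Th$-invariance of $P$) yields
\[
I^{z'}(P) = I^z(P) - \log(z'/z)\,i(P) + (z'-z)
\]
for every $P\in\sP_\Th$.

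Next I would choose $z':=z\,e^{c_1}$, so that $\log(z'/z)=c_1$. Substituting into the identity above gives
\[
I^{z'}(P) = I^z(P) - c_1\,i(P) + z(e^{c_1}-1),
\]
and the hypothesis $I^z(P)-c_1\,i(P)\le c_2$ therefore forces $I^{z'}(P)\le c_2+z(e^{c_1}-1)=:c_2'$. Thus the set in question is contained in the pure entropy sublevel set $\{P\in\sP_\Th:I^{z'}(P)\le c_2'\}$.

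Finally, I would invoke the classical result that sublevel sets of $I^{z'}$ are relatively sequentially compact in the $\cL$-topology, which is precisely \cite[Prop.~2.6]{GiiZess}. The real obstacle, as always, lies in that classical result: its proof requires a Young--Fenchel entropy inequality to extract a uniform superlinear moment bound on the counting variable $N_C$, followed by a diagonal extraction and an equi-continuity check for the integrals $P\mapsto\int f\,dP$ with $f\in\cL$. For the present proposition, however, only the activity-shift step is specific to the two-parameter formulation involving both $I^z$ and $i$; all remaining work is standard machinery and is therefore cited rather than reproduced.
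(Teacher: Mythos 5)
Your proof is correct: the activity-shift identity $I^{z'}(P)=I^z(P)-\log(z'/z)\,i(P)+(z'-z)$ with $z'=ze^{c_1}$ does reduce the two-parameter sublevel set to a sublevel set of $I^{z'}$ alone, to which \cite[Prop.~2.6]{GiiZess} applies. The paper itself offers no proof here --- it simply cites \cite[Lemma 3.4]{GiiPTRF} --- and your argument is precisely the reduction by which that lemma is obtained from \cite[Prop.~2.6]{GiiZess}, so this is essentially the same approach, merely written out.
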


In view of this fact, the following entropy bound implies that the sequence $(\hat P_n)$ has
a convergent subsequence.

\begin{prop}\label{entropybound} In the limit $n\to\infty$ we have
\be{se}
 I^{z}(\hat P_n) - c_S\; i(\hat P_n)\le |C|\inv\big(c_\G -\ln\Pi^z_C(\G)\big) + o(1).
\ee
\end{prop}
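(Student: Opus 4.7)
The plan is to reduce the specific entropy of $\hat P_n$ to the finite-volume relative entropy of $G_n$ with respect to $\Pi^z_{\L_n}$, then expand this relative entropy via the explicit density of $G_n$ and control the resulting Hamiltonian and partition-function contributions using \textbf{(S)} and \textbf{(U)}, respectively. First I would prove the convexity estimate
\[
I^z(\hat P_n) \;\le\; \frac{1}{v_n}\,I\bigl(G_n\,\bigm|\,\Pi^z_{\L_n}\bigr),
\]
exploiting that $P_n$ is a product measure over the translated blocks $\L_n+(2n{+}1)\sM k$ (so its relative entropy factorises cleanly over large unions of such blocks) together with convexity of the functional $I(\,\cdot\,|\,\Pi^z_{\L_m})$ under the shift-average defining $\hat P_n$ in \eqref{eq:hat-P_n}; the inevitable boundary discrepancy between $\L_m$ and full unions of translated blocks is of surface order and disappears after normalisation by~$v_m$.

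Since $G_n$ has density $e^{-H_{\L_n,\out}}/Z^z_{\L_n,\out}$ with respect to $\Pi^z_{\L_n}$,
\[
I\bigl(G_n\,\bigm|\,\Pi^z_{\L_n}\bigr) \;=\; -\!\int H_{\L_n,\out}\,dG_n \;-\; \log Z^z_{\L_n,\out}.
\]
For the first term, (U1) gives $\out\in\Gb\subset\Ocr^{\L_n}$, so the representation \eqref{eq:Hfr} is valid and stability \eqref{stability} yields $-H_{\L_n,\out}(\z)\le c_S\bigl(\#(\z)+\#(\bout[\L_n]{\out})\bigr)$. Integrating against $G_n$ and noting $\int N_{\L_n}\,dG_n=v_n\,i(\hat P_n)$ produces
\[
-\!\int H_{\L_n,\out}\,dG_n \;\le\; c_S\,v_n\,i(\hat P_n)\;+\;c_S\,\#\bigl(\bout[\L_n]{\out}\bigr),
\]
and the boundary count is of surface order because $\out$ was chosen with a uniform bound on the number of points per cell.

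For the partition function I would restrict the defining integral to the cell-wise event $\Gb_{\L_n}:=\{\z\in\O_{\L_n}:\th_{\sM k}(\z\cap C(k))\in\G\text{ for every }k\in L_n\}$. On this set $\tilde\o:=\z\cup\out_{\L_n^c}$ lies in $\Gb$, so (U1) confines every hyperedge of $\HE_{\L_n}(\tilde\o)$ to a uniform neighbourhood of $\L_n$, and applying the cell-wise bound of (U2) with $\ph$ in place of $\ph^+$ (so that the supremum is exactly $c_\G$) at each $k\in L_n$ and summing over $L_n$ yields
\[
H_{\L_n,\out}(\z) \;\le\; c_\G\,|L_n|+o(|L_n|).
\]
Because the Poisson reference measure factorises over the cells and $\Pi^z_{C(k)}(\{\xi:\th_{\sM k}\xi\in\G\})=\Pi^z_C(\G)$ by shift-invariance, $\Pi^z_{\L_n}(\Gb_{\L_n})=\Pi^z_C(\G)^{|L_n|}$ and hence
\[
\log Z^z_{\L_n,\out} \;\ge\; |L_n|\bigl(\log\Pi^z_C(\G)-c_\G\bigr)-o(|L_n|).
\]
Dividing by $v_n=|C|\,|L_n|$ and combining with the previous bounds delivers \eqref{se}.

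The delicate step is the Hamiltonian upper bound on $\Gb_{\L_n}$. The weights $1/\#(\hat\n)$ in (U2) are calibrated so that summing the local bounds over $k\in L_n$ counts each hyperedge $\n$ with fractional share $\#(\hat\n\cap L_n)/\#(\hat\n)$, which is strictly less than one whenever $\hat\n$ extends beyond $L_n$; such boundary hyperedges are under-represented in the weighted sum while contributing fully to $H_{\L_n,\out}$. Controlling this discrepancy — showing that it is of surface order and therefore vanishes after dividing by $v_n$ — is the main obstacle, and is handled by combining (U1) to localise which hyperedges can belong to $\HE_{\L_n}(\tilde\o)$ with the uniform summability constant $c_\G^+<\infty$ of (U2) to bound their $\ph$-contributions uniformly on $\Gb$.
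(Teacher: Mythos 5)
Your argument is correct and follows essentially the same route as the paper: reduce $I^z(\hat P_n)$ to $v_n^{-1}I(G_n|\Pi^z_{\L_n})$, bound the energy term via \textbf{(S)} and the surface-order boundary count, and bound $\ln Z^z_{\L_n,\out}$ from below by restricting to cell-wise $\G$-configurations and distributing each hyperedge over the cells of $\hat\n$ with weight $1/\#\hat\n$, using (U1) and $c_\G^+$ to absorb the straddling hyperedges into an $o(v_n)$ term. The only cosmetic difference is that you derive the first step as an inequality via convexity where the paper quotes the exact identity $I^z(\hat P_n)=v_n^{-1}I(G_n|\Pi^z_{\L_n})$; this is immaterial for the bound.
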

\begin{proof}
First of all, the definition of $\hat P_n$ readily implies that 
\be{eq:IP_n}
I^z(\hat P_n)=v_n\inv \,I(G_n|\Pi^{z}_{\L_n}) \;;
\ee
see the proof of \cite[Proposition~(16.34)]{GiiGibbs}. Likewise, $i(\hat P_n)=v_n\inv  \int  N_{\L_n}\, dG_n$.
By the definition of $G_n$, we know further that
\be{er}
I(G_n|\Pi^z_{\L_n})= -\int H_{\L_n,\out} \,dG_n - \ln Z_{\L_n,\out}^z\,.
\ee
So we need to estimate the two terms on the right-hand side.
As for the first term, hypotheses {\bf(S)} and (U1) give
\be{er2}
\int H_{\L_n,\out}\, dG_n \ge -c_S \int  N_{\L_n}\, dG_n-c_S \, \#(\boutS[\L_n]{\G}{\out}) \,,
\ee
and the assumption on $\out$ implies that $\#(\boutS[\L_n]{\G}{\out})=o(v_n)$.

It remains to estimate the partition function $ Z_{\L_n,\out}$. By (U1) we can find a number $m\ge1$
such that $\partial\L_n^\G\subset\L_{n+m}$ for all $n\ge1$. Fix any $n$ and
let $\z\in\O_{\L_n}$ be such that $\bar\z:= \z\cup\out_{\L_n^c} \in \Gb$. 
We claim that 
\be{eq:H_upper_bound}
H_{\L_n,\out}(\z) \le c_\G\, \# L_n + o(v_n)\,,
\ee
where the error term is uniform in $\z$. Indeed, since
$\n\in\HE_{\L_n}(\bar\z)$ when  $\n\in\HE(\bar\z)$ and $k\in\hat\n\cap L_n$, we can write 
\[
H_{\L_n,\out}(\z)
= \sum_{k\in L_n} \sum_{\n\in\HE(\bar\z):\,\hat\n\ni k} \frac{\ph(\n)}{\#\hat\n} 
+ \sum_{k\in L_{n+m}\setminus L_n}\, \sum_{\n\in\HE_{\L_n}(\bar\z):\,\hat\n\ni k} \frac{\ph(\n)}{\#\hat\n} \,.
\]
In view of (U2) and translation invariance, the first term on the right is not larger than $c_\G\, \# L_n$.
Likewise, the second term is dominated by $c_\G^+\, \#( L_{n+m}\setminus L_n)$. This proves~\eqref{eq:H_upper_bound} 
and leads us to the estimate
\be{er3}
Z_{\L_n,\out}^z \ge \int \1_{\Gb}(\bar\z)\, e^{-H_{\L_n,\out}(\z)}\,\Pi_{\L_n}^z(d\z)
\ge  e^{-c_\G\, \# L_n - o(v_n)} \ \Pi^z_C(\G)^{ \# L_n}.
\ee
Combining this with \eqref{eq:IP_n} to \eqref{er2} we end up with \eqref{se}.\end{proof}

The two propositions above imply that the sequence $(\hat P_n)$ admits a subsequence that converges to
some $\hat P\in\sP_\Th$ in the $\cL$-topology. The limit $\hat P$ is non-degenerate, in that $\hat P\ne\d_\emptyset$. Indeed, in view of
the lower semicontinuity of $I^z$ (implied by Proposition~\ref{compacite}) and the continuity of the intensity $i$
we obtain from \eqref{se} that
\be{eq:entropy_bound}
 I^{z}(\hat P) - c_S\; i(\hat P)\le |C|\inv\big(c_\G -\ln\Pi^z_C(\G)\big)\,.
\ee
But hypothesis (U3) ensures that the quantity on the right-hand side is strictly less than
$z= I^{z}(\d_\emptyset) - c_S\; i(\d_\emptyset)$.

It is natural to expect that $\hat P$ is the Gibbs measure we are looking for.
Unfortunately, however, we are unable to show that $\hat P$ is concentrated on the admissible configurations. 
However, since $\hat P$ is non-degenerate, we can consider the conditioned measure $P=\hat P(\,\cdot\,|\{\emptyset\}^c)\in \sP_\Th$ with $P(\{\emptyset\})=0$ and apply Proposition~\ref{prop: range-confinement}. Let us give a more precise statement of this proposition. 

Let $\ell_R, n_R,  \d_R$ be the constants
introduced in condition {\bf(R)}. Also, let $\d_-$ and $\d_+$ be the diameters of the largest open ball in $C$ and
of the smallest closed ball containing $C$, respectively. Fix an integer $m\ge 6\ell_R\d_+/\d_-$. For each $n\ge1$,
we decompose the parallelotope $\hat\L_n:=\L_{n+(2n+1)m}$
into the $(2m{+}1)^d$ translates $\L_n^k:= \L_n+(2n{+}1)\sM k$ of $\L_n$, where $k\in L_m$.
For any $\L\bsub$ let $n_\L\ge1$ be the smallest number with
$\L_{n_\L}\supset\L$ and $n_\L\ge \d_R/6\d_+$. For all $n\ge n_\L$ we consider the events
\be{eq:hat_O}
\hOcr^{\L,n} = \big\{ \min_{0\ne k\in L_m} N_{\L_n^k }> n_R \big\}\in \cF_{\hat\L_n\setminus\L}
\ee
as well as $\hOcr^\L=\bigcup_{n\ge n_\L}\hOcr^{\L,n}\in\cF_{\L^c}$.
We then have the following result.

\begin{prop}\label{Ploc}
Given any $\L\bsub$, we have $\hOcr^{\L}\subset\Ocr^\L$
and $\partial\L(\o)\subset\hat\L_n$ when $\o\in \hOcr^{\L,n}$ for some $n\ge n_\L$. Moreover,
$P(\hOcr^\L)=1$ for all $P\in\sP_\Th$ with $P(\{\emptyset\})=0$.
\end{prop}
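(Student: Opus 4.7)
The plan is to handle the two parts separately: the deterministic inclusion $\hOcr^{\L,n}\subset\Ocr^\L$ with $\partial\L(\o)\subset\hat\L_n$, and the probabilistic claim $P(\hOcr^\L)=1$. For the first, I would fix $\o\in\hOcr^{\L,n}$, set $\partial\L(\o)=\hat\L_n\setminus\L$, and for each $\z\in\O_\L$ and $\n\in\HE_\L(\o')$ with $\o':=\z\cup\o_{\L^c}$ produce a horizon $\D$ of $(\n,\o')$ via \textbf{(R)} and show $\D\subset\hat\L_n$. Since $\n\in\HE_\L(\o')$ forces $\D\cap\L\ne\emptyset$, pick $x\in\D\cap\L$ and argue by contradiction from some $y\in\D\setminus\hat\L_n$. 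Applying \textbf{(R)} to $(x,y)$ yields a chain $B_1,\dots,B_\ell$ ($\ell\le\ell_R$) of open balls whose closures form a connected set containing $\{x,y\}$, each either \emph{small} ($\diam\le\d_R$) or \emph{sparse} ($N_{B_i}(\o')\le n_R$). Because $\o=\o'$ on $\bigcup_{k\in L_m\setminus\{0\}}\L_n^k\subset\L^c$, the good event yields $N_{\L_n^k}(\o')>n_R$ for every shell cell, and hence no sparse ball may contain such a cell.

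The heart of the argument is a case analysis on the center of a sparse ball, based on the containment criterion $B\supseteq\L_n^k\Leftrightarrow R\ge\max_{q\in\L_n^k}|c-q|$, the circumradius $N\d_+/2$ of each cell, and $|\sM e_i|\le\d_+$. With $N=2n+1$, a sparse ball of center $c$ and radius $R$ satisfies (a) $R<2N\d_+$ if $c\in\L_n^0$; (b) $R<N\d_+$ if $c\in\L_n^{k_0}$ with $k_0\in L_m\setminus\{0\}$; (c) $R<d(c,\hat\L_n)+N\d_+$ if $c\in\hat\L_n^c$. I would then rule out case (c) along the chain. If the ball $B_1$ containing $x$ has $c_1\in\hat\L_n^c$, then $R_1\ge d(c_1,\hat\L_n)+Nm\d_-$ by a segment estimate, contradicting (c) since $m\ge 6\ell_R\d_+/\d_-$ forces $Nm\d_->N\d_+$. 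If the first ball in case (c) is $B_j$ with $j\ge 2$, then all predecessors obey (a) or (b) with $\diam\le 4N\d_+$ (using $n_\L\ge\d_R/(6\d_+)$, so $4N\d_+\ge\d_R$), so the overlap point $\a_{j-1}\in\bar B_{j-1}\cap\bar B_j$ lies within $4\ell_R N\d_+$ of $\L_n^0$ and hence inside $\hat\L_n$, giving $R_j\ge d(c_j,\hat\L_n)+Nm\d_--4\ell_R N\d_+\ge d(c_j,\hat\L_n)+2\ell_R N\d_+$, again contradicting (c). Every chain ball therefore satisfies (a) or (b), and $|x-y|\le\sum_i\diam B_i\le 4\ell_R N\d_+<6\ell_R N\d_+\le Nm\d_-\le|x-y|$ is the desired contradiction. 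The finite horizon property then delivers $\ph(\n,\z\cup\tilde\o_{\L^c})=\ph(\n,\o')$ whenever $\tilde\o=\o$ on $\hat\L_n\setminus\L$.

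For the probabilistic part I would invoke the ergodic decomposition of $P$: since $P(\{\emptyset\})=0$, no ergodic component equals $\d_\emptyset$, so each has strictly positive intensity and Birkhoff's individual ergodic theorem gives $N_{\L_n}(\o)\to\infty$ for $P$-a.e.\ $\o$. Hence $P(N_{\L_n}>n_R)\to 1$, and shift-invariance plus a union bound over the finite family $L_m\setminus\{0\}$ yield $P(\hOcr^{\L,n})\to 1$, so $P(\hOcr^\L)\ge\sup_n P(\hOcr^{\L,n})=1$. The chief obstacle is ruling out large sparse balls centered outside $\hat\L_n$; these are a priori unbounded in diameter, but the chain's anchoring at $x\in\L_n^0$ combined with the quantitative choices $m\ge 6\ell_R\d_+/\d_-$ and $n_\L\ge\d_R/(6\d_+)$ forces them to violate the penetration bound (c).
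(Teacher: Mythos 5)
Your argument is correct and follows the same overall strategy as the paper: use \textbf{(R)} to produce, from any horizon meeting both $\L$ and $\hat\L_n^c$, a chain of at most $\ell_R$ balls each of which is either small or sparse; observe that on the event $\hOcr^{\L,n}$ no sparse ball can contain a shell cell $\L_n^k$, $k\ne0$ (since $\L_n^k\subset\L^c$, so $\z\cup\o_{\L^c}$ agrees with $\o$ there); and conclude via the quantitative choices $m\ge 6\ell_R\d_+/\d_-$ and $n_\L\ge\d_R/(6\d_+)$, finishing with translation invariance and a union bound over $L_m\setminus\{0\}$. The only real divergence is in how the geometric contradiction is extracted: the paper truncates the chain at the first exit from $\hat\L_n$, notes that one ball must then have diameter exceeding both $\d_R$ and $3\,\mathrm{diam}\,\L_n$, and observes that such a ball contains two translates of $\L_n$, one of which is a shell cell with at most $n_R$ points; you instead bound the diameter of \emph{every} chain ball by $4N\d_+$ through a case analysis on the centers of sparse balls (ruling out centers outside $\hat\L_n$ by segment estimates) and contradict the total length $|x-y|\ge mN\d_-$. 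Both routes work; yours is more laborious but equally valid, and the slack in the constants absorbs the open/closed boundary quibbles in your containment criteria. For the probabilistic part you invoke the ergodic decomposition and Birkhoff's theorem to get $N_{\L_n}\to\infty$ a.s., where the paper cites directly the fact that $P(0<N_{\RR^d}<\infty)=0$ for translation-invariant $P$; this is a harmless substitution.
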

\begin{proof}
Let $\L$ and $n\ge n_\L$ be fixed and consider any $\o\in\O$ for which the range of $\ph$ from $\L$ 
is not confined within $\hat\L_n$. Then there exists a configuration 
$\tilde\o\in\O$ with  $\tilde\o=\o$ on $\hat\L_n\setminus\L$, a configuration $\z\in\O_\L$ and
a hyperedge $\n\in\HE_\L(\z\cup\o_{\L^c})$ such that
$\ph(\n,\z\cup\o_{\L^c})\ne\ph(\n,\z\cup\tilde\o_{\L^c})$. So, every horizon $\D$ of $(\n,\z\cup\o_{\L^c})$
as in \eqref{eq:fhp} hits $\hat\L_n^c$. By \eqref{eq:HE_L}, $\D$ hits $\L$ too. 
Now let $\D$ be chosen as in {\bf(R)}.
We pick some $x\in \L\cap \D$ and $y\in \D\setminus\hat\L_n$ and,  as in {\bf(R)}, a chain 
of $\ell\le\ell_R$ balls that hit each other successively and run from $x$ to $y$. There is a first ball
$B_{\ell'}$ hitting $\hat\L_n^c$. Shrinking $B_{\ell'}$ if necessary, we find a connected chain
$B_1,\ldots,B_{\ell'}$ of at most $\ell_R$ balls
that are all contained in $\hat\L_n$, connect $x$ to $\partial\hat\L_n$, and have either diameter at most $\d_R$
or contain at most $n_R$ particles of $\z\cup\o_{\L^c}$. 
Since the distance between $x$ and $\partial\hat\L_n$ is at least $m(2n{+}1)\d_-$, at least one ball $B_i$ has a diameter 
exceeding $3(2n{+}1)\d_+$. As $n\ge \d_R/6\d_+$, this bound is larger than both $\d_R$ and 
$3\,\text{diam}\,\L_n$. So there exist at least two indices $k,k'\in L_m$
such that $\L_n^k$ and $\L_n^{k'}$ are included in $B_i$ and thus hold at most $n_R$ points of
$\z\cup\o_{\L^c}$. At least one of these 
parallelotopes is different from $\L_n$, say $\L_{n}^k$. 
This proves that $\o\notin\hOcr^{\L,n}$ and completes the proof of the first statement.
 
To prove the second claim let $P\in\sP_\Th$ be such that $P(\{\emptyset\})=0$. 
We have
\[
1-P(\hOcr^\L) = P\Big(\bigcap_{n\ge n_\L} (\hOcr^{\L,n})^c\Big)
\le \inf_{n\ge n_\L} \sum_{0\ne k\in L_m} P\big(  N_{\L_n^k }\le n_R\big)\,.
\]
By translation invariance, the last expression is equal to
\[
(\# L_m -1)  \  \inf_{n\ge n_\L}  P\big(N_{\L_n}\le n_R\big)\,.
\]
But this term vanishes because
\[
P\big(N_{\L_n}\le n_R\big) \to P\big(N_{\RR^d}\le n_R\big) = P(\{\emptyset\})=0
\]
as $n\ti$. The next to last identity comes from the well-known fact \cite[6.1.3]{KMM}
that $P(0<N_{\RR^d}<\infty)=0$ when $P$ is translation invariant.
The proof is therefore complete.\end{proof}

The final step in the proof of Theorem \ref{th1} is as follows.

\begin{prop} The conditional probability $P=\hat P(\,\cdot\,|\{\emptyset\}^c)\in \sP_\Th$ is a Gibbs measure for $\HE$, $\ph$ and $z$.
\end{prop}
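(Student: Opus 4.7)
The strategy is to verify in turn that $P$ lies in $\sP_\Th$, that $P(\O_*^{\L,z})=1$ for every $\L\bsub$, and that the DLR equation \eqref{DLR} holds. The first two properties come almost for free: shift-invariance of $\hat P$ and of the event $\{\emptyset\}$ passes to $P$; $i(P)=i(\hat P)/\hat P(\{\emptyset\}^c)<\infty$, using that the strict inequality in \eqref{eq:entropy_bound} gives $\hat P(\{\emptyset\}^c)>0$; and $P(\O_*^{\L,z})=1$ follows from Proposition~\ref{Ploc} applied to $P$ (which has $P(\{\emptyset\})=0$ by construction), combined with \eqref{eq:Hfr} and stability \textbf{(S)} to yield $0<Z^z_{\L,\o}<\infty$ for $P$-a.e.\ $\o$.

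The heart of the proof is the DLR equation. I plan to reduce it to the following localized statement: for every $\L\bsub$, bounded local $f:\O\to\RR$, integer $n\geq n_\L$, and $A\in\cF_{\hat\L_n\setminus\L}$ with $A\subset\hOcr^{\L,n}$,
\begin{equation}\label{eq:plan_local_DLR}
\int_A f\,d\hat P \;=\; \int_A G^z_{\L,\o}(f)\,\hat P(d\o).
\end{equation}
Since the events $\hOcr^{\L,n}$ increase to $\hOcr^\L\subset\{\emptyset\}^c$ and $P(\hOcr^\L)=1$, dividing \eqref{eq:plan_local_DLR} by $\hat P(\{\emptyset\}^c)$ and letting $n\to\infty$ via a monotone-class argument extends it to \eqref{DLR} for $P$ on arbitrary $A\in\cF_{\L^c}$. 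The reason \eqref{eq:plan_local_DLR} is well adapted to the $\cL$-topology is that on $A\subset\hOcr^{\L,n}$ the horizon $\partial\L(\o)$ lies inside $\hat\L_n$ by Proposition~\ref{Ploc}, so $H_{\L,\o}$ and $Z^z_{\L,\o}$ depend on $\o$ only through $\o_{\hat\L_n\setminus\L}$; consequently $\o\mapsto\1_A(\o)\,G^z_{\L,\o}(f)$ is a bounded local (hence tame) function, dominated by $\|f\|_\infty$.

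To prove \eqref{eq:plan_local_DLR} itself, I plan to approximate using the finite-volume measures $\hat P_N$ (with $N\to\infty$, not to be confused with the horizon parameter $n$). For each $x\in\L_N$ lying in the interior region where $\hat\L_n+x$ is contained in a single block $\L_N^k$ (the complement in $\L_N$ has volume $o(v_N)$), Lemma~\ref{consistency} applied to the Gibbs distribution $G^z_{\L_N,\out}$ that generates $P_N$ on that block, combined with shift-invariance of $(\HE,\ph)$, produces the exact Gibbs property for $\L+x$ and, after transport, a DLR identity for $P_N\circ\th_x\inv$ restricted to $A$. Averaging over $x\in\L_N$ via \eqref{eq:hat-P_n} yields \eqref{eq:plan_local_DLR} for $\hat P_N$ up to a boundary error of order $o(1)$, uniform thanks to the bound $\|f\|_\infty$ on both integrands. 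Finally, the $\cL$-convergence of the chosen subsequence $\hat P_N\to\hat P$, applied to the two tame local integrands, transfers the identity to $\hat P$ in the limit.

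The main obstacle is exactly this limit passage: because the Hamiltonian has an $\o$-dependent horizon, $\o\mapsto G^z_{\L,\o}(f)$ is not a priori continuous in the $\cL$-topology. The role of the range condition \textbf{(R)}, via Proposition~\ref{Ploc}, is precisely to localize the horizon uniformly on each event $\hOcr^{\L,n}$, replacing the non-local Gibbs kernel by a bounded local approximation and thereby making the subsequential limit meaningful. A secondary technicality is to check that the boundary error in the averaging step is genuinely $o(1)$, which uses the uniform bound $\|f\|_\infty$ on the integrands together with the volume estimate saying that the interior portion of $\L_N$ carves out all of $\L_N$ up to a strip of volume $o(v_N)$.
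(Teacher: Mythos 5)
Your overall strategy---localizing the horizon on the events $\hOcr^{\L,n}$ via Proposition~\ref{Ploc}, invoking the consistency Lemma~\ref{consistency} inside each block, averaging over shifts, and passing to the $\cL$-limit using that the localized kernel is local and tame---is exactly the route the paper takes (the paper packages the shift-average as the auxiliary measures $\bar G_n$ on the $\hat\L_p$-interior of $\L_n$, which is the same manoeuvre as your restriction to interior $x$). However, there is one genuine gap: your claim that $P(\O_*^{\L,z})=1$ ``comes almost for free'' from Proposition~\ref{Ploc}, \eqref{eq:Hfr} and \textbf{(S)}. Range confinement plus stability do give $H^-_{\L,\o}<\infty$ and $Z^z_{\L,\o}<\infty$, but they give \emph{no} lower bound: for hard-exclusion potentials (which are squarely within the scope of the theorem, e.g.\ long-edge exclusion or hard equilaterality) one may have $H_{\L,\o}(\z)=\infty$ for $\Pi^z_\L$-a.e.\ $\z$, hence $Z^z_{\L,\o}=0$, and the paper explicitly warns that positivity of the partition function is not automatic here. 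Nothing in \textbf{(S)}, \textbf{(R)} or \textbf{(U)} controls $Z^z_{\L,\o}$ from below for a \emph{generic} boundary condition $\o$ drawn from $\hat P$; condition \textbf{(U)} only does so for pseudo-periodic $\o\in\Gb$.

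This gap also infects the formulation of your identity \eqref{eq:plan_local_DLR}: writing $\int_A G^z_{\L,\o}(f)\,\hat P(d\o)$ over all of $A$ presupposes that $G^z_{\L,\o}$ is defined for $\hat P$-a.e.\ $\o\in A$, i.e.\ presupposes the admissibility you are trying to establish. The correct statement, and the one the paper proves, restricts the right-hand side to $A\cap\O_*^{\L,z}$:
\[
\int_{\hOcr^{\L,\le p}} f\,d\hat P \;=\; \int_{\hOcr^{\L,\le p}\cap\O_*^{\L,z}} f_\L\,d\hat P .
\]
Admissibility is then \emph{derived}, not assumed: taking $f\equiv 1$ and letting $p\ti$ yields $\hat P(\hOcr^\L\cap\O_*^{\L,z})=\hat P(\hOcr^\L)$, whence $P(\O_*^{\L,z})=1$. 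The repair fits entirely within your own framework, but as written the step fails. A minor secondary point: the integrand $\o\mapsto\1_{A}(\o)\,f_\L(\o)$ is only universally $\cF_{\hat\L_p\setminus\L}$-measurable (Claim~A.4), so to apply $\cL$-convergence you must squeeze it between two genuinely $\cF_{\hat\L_p\setminus\L}$-measurable tame functions agreeing $\hat P$-a.s., as done via Proposition~2.2.3 of \cite{Cohn}.
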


\begin{proof} Since $\hat P\in \sP_\Th$ with $\hat P(\{\emptyset\})<1$, $P$ is well-defined and belongs to $\sP_\Th$.
To show that $P$ is a Gibbs measure we fix some $\L\bsub$ and consider the sets $\hOcr^{\L,p}$ defined in
\eqref{eq:hat_O} for $p\ge n_\L$. We also set $\hOcr^{\L,\le p}=\bigcup_{n=n_\L}^p \hOcr^{\L,n}$.
It is sufficient to show that
\be{A2}
\int_{\hOcr^{\L,\le p}} f \,d\hat P = \int_{\hOcr^{\L,\le p}\cap\O_*^{\L,z}} f_\L\, d \hat P
\ee
whenever $f:\O\to[0,1] $ is a local function and $p\ge n_\L$ is so large that $f$ is $\cF_{\hat\L_p}$-measurable.
Here, $f_\L$ is defined by
$$ f_\L(\o) :=  \int f\, dG_{\L,\o}\,.$$ 
Indeed,  letting $p\ti$ and setting $f=1$ we then find that $\hat P(\hOcr^{\L}\cap\O_*^{\L,z})=\hat P(\hOcr^{\L})$
and thus $P(\O_*^{\L,z})=1$ by Proposition~\ref{Ploc}. For arbitrary $f$ we obtain further that
$P= \int G_{\L,\o}^z\,P(d\o)$. Since $\L$ is arbitrary,
this means that $P$ is a Gibbs measure.

To prove \eqref{A2} let $f$ and $p\ge n_\L$  be fixed.
It will be convenient to replace the sequence $(\hat P_n)$ introduced in \eqref{eq:hat-P_n} by an alternative 
sequence of  measures with the same 
limit $\hat P$. Suppose $n$ is so large that $\hat\L_p \subset \L_n$ and let
$$ \L_n^\circ=\{x\in \RR^d :  \hat\L_p+x \subset \L_n\}$$
be the ``$\hat\L_p$-interior'' of $\L_n$, which coincides with the closure of
$\L_{n-p-(2p+1)m}$.
We define the  (subprobability) measure 
\[
\bar G_n:=\frac{1}{v_n} \int_{\L_n^\circ} G_{\L_n,\out}^z\circ\th^{-1}_x\, dx
=\frac{1}{v_n} \int_{\L_n^\circ} G_{\L_n-x,\th_{x}\out}^z\, dx\,;
\]
the equality comes from the shift-invariance of $\ph$ and the symmetry of $ \L_n^\circ$.
The argument in  \cite[Lemma 5.7]{GiiZess} then shows that $\int f\,d\hat P_n -\int f\,d\bar G_n\to0$
for all $f\in\cL$. This means that $\hat P$ can also be viewed as an accumulation point of the 
sequence~$(\bar G_n)$. Now let $x\in\L_n^\circ$, so that $\hat\L_p\subset\L_n-x$.
Using the consistency lemma~\ref{consistency} and the fact that  
$\hOcr^{\L,\le p}\in\cF_{\hat\L_p\setminus\L}\subset\cF_{(\L_n-x)\setminus\L}$
we find
\[
\int_{\hOcr^{\L,\le p}}\,f \,dG_{\L_n-x,\th_x\out}^z
= \int_{\O_*^{\L,z}\cap\hOcr^{\L,\le p} }
	\big(\int\,f\,dG_{\L,\o}^z\big) \,G_{\L_n-x,\th_x\out}^z(d\o)\,,
\]
and averaging over $x$ yields
\be{eq:finiteGibbs}
\int_{\hOcr^{\L,\le p}}\, f\,d\bar G_n
= \int_{\O_*^{\L,z}\cap\hOcr^{\L,\le p} }\, f_\L\,d\bar G_n\,.
\ee
The integrand of the integral on the left is measurable with respect to $\cF_{\hat\L_p\setminus\L}$ and thus belongs
to $\cL$.  By (A.4) in the appendix, the integrand on the right of \eqref{eq:finiteGibbs} is measurable with respect 
to the universal completion  $\cF_{\hat\L_p\setminus\L}^*$ and thus can be squeezed between two functions in
$\cL$ which coincide $\hat P$-almost surely; cf.\ \cite{Cohn}, Proposition~2.2.3. So, \eqref{eq:finiteGibbs} gives
\eqref{A2} in the limit when $n$ runs through a subsequence for which $\bar G_n$
tends to $\hat P$ in the $\cL$-topology.\end{proof}

The proof of Theorem \ref{th2} requires only two minor observations.
First, we note that (\^ U1) and (\^ U2) together with  {\bf (R)} imply (U1). Indeed, 
let $\o\in\Gb$. Condition (\^ U2) then shows that $H_{\L_m,\o}(\o_{\L_m})<\infty$ for all $m\ge1$.
If $m$ is so large that  $av_m-b>n_R$ for the constant $n_R$ in {\bf (R)}, 
the lower density bound  (\^ U1) thus gives that each
translate $\L_m^k$, $k\in\ZZ^d$, contains more than $n_R$ points of $\o$. 
Hence, every ball with at most $n_R$ points has a diameter no larger than $2\,\text{diam}(\L_m)$. Invoking
the range condition {\bf (R)}, we can therefore conclude that (U1) holds with 
$r_\G\le\ell_R\,\max(\d_R, 2\,\text{diam}(\L_m))$.

Next we note that the non-rigidity condition in its strong form (U3) was only used below \eqref{eq:entropy_bound}
when we showed that the accumulation point $\hat P$ is non-degenerate; for all other purposes, the weak form
(\^U3) was sufficient. However, the non-degeneracy of $\hat P$ is trivial under (\^ U1) because
$i(\hat P_n)\ge a-b/v_n$ and thus $i(\hat P)\ge a>0$ by the continuity of $i$.
This completes the proof of Theorem~\ref{th2}.

\renewcommand{\thesection}{\Alph{section}}
\setcounter{section}{1}
\section*{Appendix: Measurability}\label{app:measurability}

Here we collect and prove the measurability properties we have used
and add a further comment on measurability. Let $\L\bsub$ be fixed.

\begin{ass}
$\HE_\L:=\{(\n,\o):\n\in\HE_\L(\o)\}\in (\cF_f\otimes\cF)^*$.
\end{ass}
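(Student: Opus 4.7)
The plan is to realize $\HE_\L$ as the projection of a Borel-measurable set in a product of standard Borel spaces, and then invoke the classical fact that such projections are analytic (Suslin) sets, which are universally measurable. The standard Borel structure on $(\O,\cF)$, $(\O_f,\cF_f)$ and $(\O_\L,\cF_\L')$ (coming e.g.\ from the vague topology on locally finite configurations) is what makes this work.

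First I would introduce the map $\Psi : \O \times \O_\L \to \O$ defined by $\Psi(\o,\z)=\z\cup\o_{\L^c}$. This is measurable because $N_\D\circ\Psi$ is a measurable function of $(\o,\z)$ for every $\D\bsub$. Extending $\ph$ by $0$ on $\HE^c$ (as agreed in the paper), the composed map $(\n,\o,\z)\mapsto \ph(\n,\Psi(\o,\z))$ is measurable on $\O_f\times\O\times\O_\L$, and likewise for the simpler map $(\n,\o)\mapsto \ph(\n,\o)$. Consequently the set
\[
B := \big\{(\n,\o,\z)\in\O_f\times\O\times\O_\L:\; (\n,\o)\in\HE \text{ and } \ph(\n,\Psi(\o,\z))\neq \ph(\n,\o)\big\}
\]
belongs to $\cF_f\otimes\cF\otimes\cF_\L'$, using also that $\HE$ is measurable by hypothesis.

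Next I would observe that, by the very definition \eqref{eq:HE_L} of $\HE_\L(\o)$,
\[
\HE_\L = \pr_{12}(B),
\]
where $\pr_{12}:\O_f\times\O\times\O_\L\to\O_f\times\O$ is the coordinate projection. Since $\O_\L$ is a standard Borel space (being Polish under the vague topology and $\cF_\L'$ its Borel $\s$-algebra), the projection theorem for analytic sets (see e.g.\ Cohn \cite{Cohn}, or Dellacherie--Meyer) tells us that $\pr_{12}(B)$ is an analytic subset of $\O_f\times\O$. Analytic sets in standard Borel spaces are universally measurable, so $\HE_\L\in(\cF_f\otimes\cF)^*$, as claimed.

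The only potentially delicate point is the appeal to standard-Borel structure: one must verify that the abstract measurable spaces introduced in Section~2 are standard Borel, or equivalently countably generated and isomorphic to a Borel subset of a Polish space. This is routine from the fact that $\O$ can be identified with the set of locally finite integer-valued Radon measures and endowed with the vague topology, under which it is Polish and its Borel $\s$-algebra agrees with $\cF$; the same holds for $\O_f$ and $\O_\L$. Once this identification is in place, the projection step is immediate.
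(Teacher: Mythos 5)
Your proof is correct and follows essentially the same route as the paper: both realize $\HE_\L$ as the projection, along the Polish factor $\O_\L$, of a jointly measurable set built from the condition $\ph(\n,\z\cup\o_{\L^c})\ne\ph(\n,\o)$, and then invoke the projection theorem for analytic sets (Cohn, Prop.~8.4.4) together with the universal measurability of analytic sets. Your explicit inclusion of the clause $(\n,\o)\in\HE$ in the set $B$ is a harmless (indeed slightly more careful) variant of the paper's $\bar\HE_\L=\{\ph\circ g\ne\ph\circ f_\L\}$.
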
\noindent 
Indeed, consider  the measurable functions
$f_\L(\n,\z,\o)=(\n,\z\cup\o_{\L^c})$ and $g(\n,\z,\o)=(\n,\o)$ from $\O_f\times\O_\L\times\O$ to
$\O_f\times\O$. Since $\HE$ and $\ph$ are measurable by assumption, the event $\bar\HE_\L:= \{\ph\circ g\ne \ph\circ f_\L\}$ 
then belongs to $\cF_f\otimes\cF_\L\otimes\cF$, and 
$\HE_\L$ is equal to the projection image $g(\bar\HE_\L)$. 
Since $\cF_\L$ is known \cite{Kall,KMM} to be the Borel $\s$-algebra for a Polish topology
on $\O_\L$, 
one can apply Prop.~8.4.4 of \cite{Cohn} to conclude that $\HE_\L$ is universally measurable, as claimed. 

\begin{ass}The functions $(\z,\o)\to H_{\L,\o}(\z)$ and  $(\z,\o)\to H_{\L,\o}^-(\z)$ 
are measurable with respect  to $(\cF_\L'\otimes\cF_{\L^c})^*$.
\end{ass}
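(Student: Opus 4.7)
The plan is to prove both measurability assertions through a single lemma: for every non-negative $(\cF_f \otimes \cF)^*$-measurable function $F$ on $\O_f \times \O$, the map
\[
\o \mapsto SF(\o) := \sum_{\n \subset \o,\ \#\n < \infty} F(\n, \o)
\]
is $\cF^*$-measurable. Granting this, I would apply it with $F := \1_{\HE_\L}\, \ph^\pm$; these are non-negative and $(\cF_f \otimes \cF)^*$-measurable by Claim A.1 together with the assumed $\cF_f \otimes \cF$-measurability of $\ph$. Since the substitution $\s_\L: (\z, \o) \mapsto \z \cup \o_{\L^c}$ is plainly $(\cF_\L' \otimes \cF_{\L^c}, \cF)$-measurable, the compositions $H_{\L,\o}^\pm(\z) = S(\1_{\HE_\L}\, \ph^\pm)\bigl(\s_\L(\z, \o)\bigr)$ are $(\cF_\L' \otimes \cF_{\L^c})^*$-measurable. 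Then $H_{\L,\o}^-$ is handled directly, and $H_{\L,\o} = H_{\L,\o}^+ - H_{\L,\o}^-$ inherits the same measurability under any fixed convention for $\infty - \infty$.

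To prove the reduction, I would truncate and approximate $SF$ by finite sums. With $B_n = B(0, n)$ and monotone convergence,
\[
SF(\o) = \lim_{n \to \infty} \sum_{\n \subset \o \cap B_n} F(\n, \o),
\]
so it suffices to show $\cF^*$-measurability of each partial sum. On the event $\{N_{B_n} = m\}$, order the points of $\o \cap B_n$ lexicographically as $x_1(\o), \dots, x_m(\o)$; these are $\cF$-measurable. Hence
\[
\sum_{\n \subset \o \cap B_n} F(\n, \o) = \sum_{m=0}^\infty \1_{\{N_{B_n} = m\}}(\o) \sum_{J \subset \{1,\dots,m\}} F\bigl(\{x_i(\o) : i \in J\},\, \o\bigr).
\]
Each inner summand is a composition $F \circ f_{n,m,J}$ with $f_{n,m,J}: (\O, \cF) \to (\O_f \times \O, \cF_f \otimes \cF)$ measurable on $\{N_{B_n} = m\}$, and a countable combination of non-negative $\cF^*$-measurable functions is again $\cF^*$-measurable, so the measurability claim reduces to the fact that each $F \circ f_{n,m,J}$ is $\cF^*$-measurable.

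The main potential obstacle is this last step: preservation of universal measurability under measurable substitution. But it is essentially a direct unpacking of the definition of $\cA^*$. Given any probability $Q$ on $(\O, \cF)$ and any $A \in (\cF_f \otimes \cF)^*$, the pushforward $Q \circ f_{n,m,J}^{-1}$ is a probability on $\cF_f \otimes \cF$; one chooses $\cF_f \otimes \cF$-measurable sets $A_1 \subset A \subset A_2$ with $(Q \circ f_{n,m,J}^{-1})(A_2 \setminus A_1) = 0$. Their preimages under $f_{n,m,J}$ then form $\cF$-measurable sandwiching sets for $f_{n,m,J}^{-1}(A)$ differing by a $Q$-null set. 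As $Q$ is arbitrary, $f_{n,m,J}^{-1}(A) \in \cF^*$, so $F \circ f_{n,m,J}$ is $\cF^*$-measurable as required, and the lemma—hence the claim—follows.
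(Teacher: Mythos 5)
Your proof is correct and arrives at the same conclusion as the paper's, but by a somewhat different and more self-contained route. The paper likewise starts from Claim A.1 and from the preservation of universal measurability under measurable substitutions (your final paragraph is exactly Cohn's Lemma 8.4.6, which the paper cites), but it treats the sum differently: for a fixed probability measure it squeezes the universally measurable function $\1_{\HE_\L}(\n,\z\cup\o_{\L^c})$ between two genuinely product-measurable functions coinciding almost surely, applies a cited measurability theorem for sums over finite subconfigurations (Theorem 5.1.2 of \cite{KMM}) to each squeezing function, and then passes back to the universal completion via the converse direction of Cohn's Proposition 2.2.3. You instead prove the summation statement from scratch for universally measurable integrands, by truncating to balls $B_n$, enumerating the points lexicographically on $\{N_{B_n}=m\}$, and pushing universal measurability through each substitution $f_{n,m,J}$ term by term; this makes the squeezing step unnecessary, since a countable sum of non-negative $\cF^*$-measurable functions is automatically $\cF^*$-measurable. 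The paper's route buys brevity by outsourcing the enumeration argument to the reference; yours buys independence from that reference and a cleanly quotable lemma (the $\cF^*$-measurability of $\o\mapsto\sum_{\n\subset\o}F(\n,\o)$ for non-negative $(\cF_f\otimes\cF)^*$-measurable $F$), at the cost of spelling out the routine but fiddly measurable-enumeration details. The remaining steps --- composing with $\s_\L$ and writing $H_{\L,\o}=H^+_{\L,\o}-H^-_{\L,\o}$ under a fixed convention for $\infty-\infty$ --- are handled correctly.
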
\noindent
To see this, we observe first that the mapping
$f_\L$ in Claim A.1 is measurable from $\cF_f\otimes\cF_\L'\otimes\cF_{\L^c}$ to $\cF_f\otimes\cF$, 
and therefore also from $(\cF_f\otimes\cF_\L'\otimes\cF_{\L^c})^*$ to $(\cF_f\otimes\cF)^*$; see
 \cite{Cohn}, Lemma~8.4.6. In view of Claim A.1, this means that the indicator function
 $\1_{\HE_\L}(\n,\z\cup\o_{\L^c})$ is measurable with respect to $(\cF_f\otimes\cF_\L'\otimes\cF_{\L^c})^*$.
Given any probability measure on $\O_f\times\O_\L\times\O_{\L^c}$, we can therefore
squeeze this indicator function between two 
$\cF_f\otimes\cF_\L'\otimes\cF_{\L^c}$-measurable functions which coincide almost surely; cf. Proposition~2.2.3 
of \cite{Cohn}. Writing 
\[
H_{\L,\o}(\z)= \sum_{\n\subset\o}\1_{\HE_\L}(\n,\z\cup\o_{\L^c})\,\ph(\n,\z\cup\o_{\L^c})\,,
\]
applying Theorem 5.1.2 of \cite{KMM} repeatedly when the indicator function is replaced by one of the squeezing functions 
and using Proposition~2.2.3 of \cite{Cohn} in the converse direction we get the result.  

\begin{ass}  The partition function $\o\to Z_{\L,\o}^z$ is measurable with respect to $\cF_{\L^c}^*$,
$\O_*^{\L,z}\in\cF_{\L^c}^*$, and $G^z_{\L,\o}(F)$ is a probability kernel
from $(\O_*^{\L,z},\cF_{\L^c}^*|_{\O_*^{\L,z}})$ to $(\O,\cF)$. 
\end{ass}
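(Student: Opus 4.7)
The plan is to reduce the three assertions of Claim A.3 to a single technical lemma about partial integrals of universally measurable integrands, which I would prove first. The lemma reads: if $f\colon\O_\L\times\O\to[0,\infty]$ is $(\cF_\L'\otimes\cF_{\L^c})^*$-measurable, then $\o\mapsto \int f(\z,\o)\,\Pi_\L^z(d\z)$ is $\cF_{\L^c}^*$-measurable. To prove it, I would fix an arbitrary probability $Q$ on $(\O,\cF_{\L^c})$, form the product $P=\Pi_\L^z\otimes Q$ on $\cF_\L'\otimes\cF_{\L^c}$, and use that universal measurability implies $P$-measurability. By Proposition~2.2.3 of \cite{Cohn}, one can sandwich $f_1\le f\le f_2$ between $\cF_\L'\otimes\cF_{\L^c}$-measurable functions with $f_1=f_2$ $P$-a.s. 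Classical Fubini then produces $\cF_{\L^c}$-measurable functions $F_i(\o):=\int f_i(\z,\o)\,\Pi_\L^z(d\z)$ sandwiching $\int f(\z,\o)\,\Pi_\L^z(d\z)$ pointwise and coinciding $Q$-almost surely, so the partial integral is $Q$-measurable. Since $Q$ was arbitrary, universal measurability follows.

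With this lemma in hand, I would apply it twice: once to $(\z,\o)\mapsto e^{-H_{\L,\o}(\z)}$ and once to $(\z,\o)\mapsto \1_{\{H_{\L,\o}^-(\z)=\infty\}}$, both of which are $(\cF_\L'\otimes\cF_{\L^c})^*$-measurable by Claim~A.2 (for the second, composition with the Borel indicator $\1_{\{\infty\}}$ preserves universal measurability). This immediately yields the $\cF_{\L^c}^*$-measurability of $\o\mapsto Z_{\L,\o}^z$ and of $\o\mapsto \Pi_\L^z(\{\z:H_{\L,\o}^-(\z)=\infty\})$. The admissibility set can then be written as
\[
\O_*^{\L,z}=\{\o:\Pi_\L^z(\{\z:H_{\L,\o}^-(\z)=\infty\})=0\}\cap\{\o:0<Z_{\L,\o}^z<\infty\},
\]
so it belongs to $\cF_{\L^c}^*$ as the intersection of two universally measurable sets.

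For the kernel property, fix $F\in\cF$. The map $(\z,\o)\mapsto \z\cup\o_{\L^c}$ is $\cF_\L'\otimes\cF_{\L^c}$-measurable into $(\O,\cF)$, so the integrand $(\z,\o)\mapsto\1_F(\z\cup\o_{\L^c})\,e^{-H_{\L,\o}(\z)}$ is $(\cF_\L'\otimes\cF_{\L^c})^*$-measurable by Claim~A.2 and standard closure of universal completions under products; the lemma then shows its $\Pi_\L^z$-integral is $\cF_{\L^c}^*$-measurable in $\o$. Dividing by the strictly positive, finite, $\cF_{\L^c}^*$-measurable function $Z_{\L,\o}^z$ on $\O_*^{\L,z}$ preserves measurability, so $\o\mapsto G^z_{\L,\o}(F)$ is $\cF_{\L^c}^*|_{\O_*^{\L,z}}$-measurable. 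For fixed $\o\in\O_*^{\L,z}$, admissibility makes the defining formula~\eqref{eq:Gibbs_distr} a well-defined normalized nonnegative set function, which is $\sigma$-additive by monotone convergence; hence it is a probability measure on $(\O,\cF)$.

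The main obstacle is the lemma itself: the naive Fubini theorem does not apply to an integrand that is only known to be universally measurable, so one cannot simply integrate and hope for measurability. The sandwich device must be deployed separately against every probability $Q$ on $\cF_{\L^c}$, relying on the fact that the universal completion is exactly the intersection of all $P$-completions. Once that machinery is established, the three statements of Claim~A.3 follow by routine composition and by division by a measurable function that is bounded away from $0$ and $\infty$ on $\O_*^{\L,z}$.
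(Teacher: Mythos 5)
Your proposal is correct and follows essentially the same route as the paper: the paper also fixes an arbitrary probability measure on $\cF_{\L^c}$, sandwiches the universally measurable integrand between two product-measurable functions coinciding almost surely (Proposition~2.2.3 of \cite{Cohn}), and invokes the measurability part of Fubini's theorem, applying this three times to $e^{-H_{\L,\o}(\z)}$, to the finiteness of $H^-_{\L,\o}$, and to the integral in \eqref{eq:Gibbs_distr}; you have merely isolated that argument as an explicit lemma. (Only a cosmetic quibble: $Z_{\L,\o}^z$ is pointwise positive and finite on $\O_*^{\L,z}$, not ``bounded away from $0$ and $\infty$,'' but that is all the quotient needs.)
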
\noindent
Let $P$ be an arbitrary probability measure on $\cF_{\L^c}$. As in Claim A.2, we can squeeze the function 
$e^{-H_{\L,\o}(\z)}$ between two $\cF_{\L}'\otimes\cF_{\L^c}$-measurable functions which coincide
$\Pi_\L^z\otimes P$-almost surely. Integrating these functions over $\z$ with respect to $\Pi_\L^z$ 
we obtain two functions of $\o$, which squeeze  $Z_{\L,\o}^z$,
are $\cF_{\L^c}$-measurable by the measurability part of Fubini's theorem, and coincide  $P$-almost surely.
As $P$ was arbitrary, the first result follows. In the same way one finds that the function
$\o\to\Pi_\L^z(H_{\L,\o}^-<\infty)$ is  $\cF_{\L^c}^*$-measurable. Hence
\[
\O_*^{\L,z}=\big\{ \o\in\O:\Pi_\L^z(H_{\L,\o}^-<\infty)=1, \ 0< Z_{\L,\o}^z<\infty\big\}\in\cF_{\L^c}^*\,.
\]
One also finds that the integral in \eqref{eq:Gibbs_distr} depends $\cF_{\L^c}^*$-measurably on $\o$,
which proves the last statement.

\begin{ass} Let $p\ge n_\L$ be fixed and suppose condition {\bf(R)} holds. 
Claims A.2 and A.3 remain  valid with $\cF_{\hat\L_p\setminus\L}$ in place of $\cF_{\L^c}$
as soon as all quantities are restricted to the set $\hOcr^{\L,p}$ defined in \eqref{eq:hat_O}.
In particular,  $\O_*^{\L,z}\cap\hOcr^{\L,p}\in\cF_{\hat\L_p\setminus\L}^*$, and
$G^z_{\L,\o}(F)$ is a probability kernel
from $(\O_*^{\L,z}\cap\hOcr^{\L,p},\cF_{\hat\L_p\setminus\L}^*|_{\O_*^{\L,z}\cap\hOcr^{\L,p}})$ to $(\O,\cF)$.
\end{ass}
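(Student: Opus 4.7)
The strategy is to reduce Claim A.4 to Claims A.2 and A.3 by showing that, once restricted to $\hOcr^{\L,p}$, every object in sight depends on the exterior configuration only through its restriction to $D:=\hat\L_p\setminus\L$. With this localization in hand, the measurability arguments of the earlier claims transport verbatim with $\cF_{\L^c}$ replaced by $\cF_D$.

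First, from \eqref{eq:hat_O} we read off $\hOcr^{\L,p}\in\cF_D$: the defining counts $N_{\L_p^k}$ for $k\in L_m\setminus\{0\}$ refer to parallelotopes $\L_p^k\subset\hat\L_p\setminus\L_p\subset D$ (using $\L\subset\L_p$). Second, for every $\o\in\hOcr^{\L,p}$ and every $\z\in\O_\L$ I want to establish
\[
H_{\L,\o}(\z) \;=\; H_{\L,\iota(\o_D)}(\z),
\]
where $\iota:\O_D\hookrightarrow\O$ is the natural inclusion. Proposition~\ref{Ploc} gives $\o\in\Ocr^\L$ with $\partial\L(\o)\subset\hat\L_p$, and the convention $\partial\L(\o)=\L^{r_{\L,\o}}\setminus\L$ tightens this to $\partial\L(\o)\subset D$. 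Because $\hOcr^{\L,p}\in\cF_D$, the truncated configuration $\iota(\o_D)$ also lies in $\hOcr^{\L,p}\subset\Ocr^\L$. Applying the range-confinement property to both $\o$ and $\iota(\o_D)$ on a common enlarged boundary $\partial^\star\subset D$ containing $\partial\L(\o)\cup\partial\L(\iota(\o_D))$, the two finite sums in \eqref{eq:Hfr} produce the same collection of potential values.

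Third, Claim A.2 upgrades as follows. Define $g(\z,\xi):=H_{\L,\iota(\xi)}(\z)$ for $(\z,\xi)\in\O_\L\times\O_D$. The inclusion $\iota$ is $\cF_D'$-$\cF$ measurable, so by Claim A.2 together with the stability of universal measurability under measurable composition (\cite[Lemma 8.4.6]{Cohn}), $g$ is $(\cF_\L'\otimes\cF_D')^*$-measurable. Precomposing with the measurable projection $\pr_D:\O\to\O_D$ and invoking the localization equality from Step 2 yields $(\cF_\L'\otimes\cF_D)^*$-measurability of $(\z,\o)\mapsto H_{\L,\o}(\z)$ on $\O_\L\times\hOcr^{\L,p}$, and the same argument handles $H_{\L,\o}^-$. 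From here the proof of Claim A.3 reruns word for word with $\cF_{\L^c}$ replaced by $\cF_D$: the Fubini-squeezing argument against $\Pi_\L^z$ gives $\cF_D^*$-measurability of $Z^z_{\L,\cdot}$ on $\hOcr^{\L,p}$, places $\O_*^{\L,z}\cap\hOcr^{\L,p}$ in $\cF_D^*$, and delivers the probability kernel property for the restricted $\sigma$-algebra.

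The main obstacle is the equality in Step 2. Its delicacy is that $\partial\L(\o)$ is defined only implicitly as the minimal radius achieving range confinement, so one must check that neither this radius nor the hyperedge set appearing in \eqref{eq:Hfr} is disturbed when $\o$ is truncated to $\o_D$. Once the common-boundary argument sketched above is carried out carefully, every other step is routine bookkeeping on top of the universal-measurability machinery already assembled for Claims A.2 and A.3.
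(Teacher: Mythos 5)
Your proof follows the paper's own argument: the decisive step in both is the localization identity $H_{\L,\o}(\z)=H_{\L,\,\o_{\hat\L_p\setminus\L}}(\z)$ for $\o\in\hOcr^{\L,p}$, obtained from Proposition~\ref{Ploc} together with \eqref{eq:Hfr}, after which the measurability arguments of Claims A.2 and A.3 transfer verbatim with $\cF_{\L^c}$ replaced by $\cF_{\hat\L_p\setminus\L}$. The paper states this reduction in two lines; your version spells out the same reduction (including the observation that $\hOcr^{\L,p}\in\cF_{\hat\L_p\setminus\L}$ and that truncation to $\hat\L_p\setminus\L$ preserves membership in $\hOcr^{\L,p}$) in more detail but is not a different route.
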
\noindent
Indeed, by Proposition~\ref{Ploc} and \eqref{eq:Hfr} we have
\[
H_{\L,\o}(\z)=\sum_{\n\in\HE_\L(\z\cup \o_{\hat\L_p\setminus\L})} \ph(\n,\z\cup \o_{\hat\L_p\setminus\L})
\quad\text{ when }\o\in \hOcr^{\L,p}\,.
\]
The counterpart of Claim A.2 is therefore obvious, and the analog of Claim A.3 follows as before.

\begin{ass}  The use of universal measurability could be avoided by modifying the definition of~$\HE_\L$.
\end{ass}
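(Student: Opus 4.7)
Claims~A.2--A.4 all trace back to Claim~A.1, which invokes universal completion solely because the definition \eqref{eq:HE_L} of $\HE_\L$ contains an existential quantifier over the uncountable space $\O_\L$. My plan is to replace this definition by a directly $\cF_f\otimes\cF$-measurable one.

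The first step is to enrich the data with a measurable horizon map: a $\cF_f\otimes\cF$-measurable function $(\n,\o)\mapsto D_{\n,\o}\bsub$ such that $D_{\n,\o}$ is a horizon for $(\n,\o)$ in the sense of~\eqref{eq:fhp}. The natural choice is the standard horizon $B_{\n,\o}=\bar B(g_\n,r_{\n,\o})$ described after~\eqref{eq:fhp}, and in all the concrete examples of Section~\ref{sec-examples} this map is measurable by an explicit geometric formula in $(\n,\o)$. One then adopts the alternative definition
\[
\HE_\L(\o):=\bigl\{\n\in\HE(\o):D_{\n,\o}\cap\L\ne\emptyset\bigr\},
\]
which is patently $\cF_f\otimes\cF$-measurable. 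By the finite horizon property, the original $\HE_\L(\o)$ is contained in this new one: any $\n\in\HE(\o)$ whose potential value changes under a modification of $\o_\L$ cannot have a horizon disjoint from $\L$.

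The second step is to verify that the Hamiltonian $H_{\L,\o}(\z)$ built from the new $\HE_\L$ defines the same finite-volume Gibbs distribution~\eqref{eq:Gibbs_distr} as the original. The additional summands correspond to hyperedges $\n$ whose value $\ph(\n,\z\cup\o_{\L^c})$ is insensitive to further modifications of the $\L$-part of the configuration, so by the finite horizon property each such summand depends on $\o_{\L^c}$ alone. A careful bookkeeping shows that the \emph{set} of additional summands, and therefore their sum, is itself determined by $\o_{\L^c}$ only, so these terms factor out of both the numerator and the denominator of~\eqref{eq:Gibbs_distr}. With the new definition in place, the whole appendix goes through verbatim with $\cF_f\otimes\cF$ and $\cF_{\L^c}$ in place of $(\cF_f\otimes\cF)^*$ and $\cF_{\L^c}^*$.

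The main obstacle is precisely this bookkeeping. A priori, both $\HE(\z\cup\o_{\L^c})$ and the horizon map $D_{\n,\z\cup\o_{\L^c}}$ can vary with $\z$, and one has to argue that the joint effect cancels out for the extra summands. In all the geometric examples of Section~\ref{sec-examples} this is transparent, which is why the remark is stated only informally here; in full generality the cleanest option is simply to postulate the measurable horizon map $D$ as part of the data defining $(\HE,\ph)$, thereby dispensing with universal completions throughout.
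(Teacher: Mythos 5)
Your route is genuinely different from the paper's, and it has a gap at its central step. The paper keeps the definition intrinsic and simply replaces the existential quantifier over $\z\in\O_\L$ in \eqref{eq:HE_L} by a probabilistic one: $\n\in\HE_\L(\o)$ iff $\n\cap\L\ne\emptyset$ or
\[
\Pi_\L^z\otimes\Pi_\L^z\Big((\z_1,\z_2)\in\O_\L^2:\ \ph(\n,\z_1\cup\o_{\L^c})\ne\ph(\n,\z_2\cup\o_{\L^c})\Big)>0\,.
\]
Measurability of this modified $\HE_\L$ is then immediate from the measurability part of Fubini's theorem, with no extra structural hypotheses on $(\HE,\ph)$; the price, which the paper records explicitly, is that one only captures variations of $\ph$ on non-null sets of $\z$, that the monotonicity $\HE_\L\subset\HE_\D$ used in Lemma~\ref{consistency} survives only $\Pi^z_{\D\setminus\L}$-almost surely, and that $\Ocr^\L$ must be redefined for Proposition~\ref{Ploc}.

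Your definition $\HE_\L(\o)=\{\n\in\HE(\o):D_{\n,\o}\cap\L\ne\emptyset\}$ runs into two problems. First, it presupposes a \emph{measurable selection} of horizons, which the finite horizon property does not supply: the ``smallest possible'' radius $r_{\n,\o}$ is itself defined by a quantification over an uncountable family of configurations, so its measurability is no more evident than that of $\HE_\L$; you concede this must be postulated as extra data, which changes the statement being proved. Second, and decisively, the cancellation in your second step is asserted but not established, and it fails in general. The inclusion of the original $\HE_\L$ in yours is fine, and for an extra summand $\n\subset\o_{\L^c}$ the value $\ph(\n,\z\cup\o_{\L^c})$ is indeed independent of $\z$; but its \emph{membership} in your $\HE_\L(\z\cup\o_{\L^c})$ is governed by the condition $D_{\n,\z\cup\o_{\L^c}}\cap\L\ne\emptyset$, and the horizon $D_{\n,\z\cup\o_{\L^c}}$ depends on the whole configuration, hence on $\z$. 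So the set of extra summands, and with it their total, can vary with $\z$, and these terms do not factor out of the numerator and denominator of \eqref{eq:Gibbs_distr}. You name this as ``the main obstacle'' and then dispose of it by appeal to the examples and by strengthening the hypotheses; for the general claim, that is precisely the point requiring proof, and the paper's Fubini device avoids it altogether.
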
\noindent
Namely, $\HE_\L(\o)$ could be defined as the set of all $\n\in\HE(\o)$
for which either $\n\cap\L\ne\emptyset$ or 
\[
\Pi_\L^z\otimes \Pi_\L^z\Big((\z_1,\z_2)\in\O_\L^2:
\ph(\n,\z_1\cup\o_{\L^c})\ne\ph(\n,\z_2\cup\o_{\L^c})\Big)>0\,.
\]
Then $\HE_\L\in \cF_f\otimes\cF$ by the measurability part of Fubini's theorem, and Claims A.2, A.3
and A.4 would follow without the stars referring to universal extensions.
This modified definition, however, is less intuitive and destroys the simple monotonicity of $\HE_\L(\o)$ in $\L$ 
which was used in the proof of Lemma~\ref{consistency}. One can still show that the required monotonicity holds for 
$\Pi_{\D\setminus\L}^z$-almost all $\xi$, but this is more involved. It is also necessary
to redefine $\Ocr^{\L}$ to obtain Proposition~\ref{Ploc}.
We therefore decided to make use of universal measurability.

\end{document}